\newcommand{\lvt}{\left|\kern-1.35pt\left|\kern-1.3pt\left|}
\newcommand{\rvt}{\right|\kern-1.3pt\right|\kern-1.35pt\right|}
\newtheorem{thm}{Theorem}[section]
\newtheorem{lem}[thm]{Lemma}
\newtheorem{prop}[thm]{Proposition}
\theoremstyle{remark}
 \def\la{{\langle}}
 \def\ra{{\rangle}}
 \def\d{\mathrm{d}}
 \def\sph{{\mathbb{S}^{d-1}}}
 \def\a{{\alpha}}
 \def\b{{\beta}}
 \def\t{{\theta}}
 \def\l{{\lambda}}
 \def\o{{\omega}}
 \def\s{\sigma}
 \def\la{{\langle}}
 \def\ra{{\rangle}}
 \def\CD{{\mathcal D}}
 \def\CH{{\mathcal H}}
 \def\CL{{\mathcal L}}
 \def\CV{{\mathcal V}}
 \def\CW{{\mathcal W}}
 \def\BB{{\mathbb B}}
 \def\RR{{\mathbb R}}
 \def\SS{{\mathbb S}}
 \def\VV{{\mathbb V}}
\def\lla{\langle{\kern-2.5pt}\langle}      
\def\rra{\rangle{\kern-2.5pt}\rangle}
\newcommand{\wh}{\widehat}
\def\f{\frac}
\def\quotation#1{
\smallskip
\hskip 0.2in{\vbox{
\hsize=4.5in
\it #1}}
\smallskip}
\begin{document}

\title{non-homogeneous wave equation on a cone}

\author{Sheehan Olver}
\address{Department of Mathematics\\
Imperial College\\
 London \\
 United Kingdom  }\email{s.olver@imperial.ac.uk}

\author{Yuan Xu}
\address{Department of Mathematics\\ University of Oregon\\
    Eugene, Oregon 97403-1222.}\email{yuan@uoregon.edu}

 
\date{\today}
\keywords{Wave equation, orthogonal polynomials, orthogonal series, cone}
\subjclass[2010]{ 33C50, 35C10, 42C05, 42C10}

\begin{abstract} 
The wave equation $\left(\partial_{tt} - c^2 \Delta_x\right) u(x,t) = e^{-t} f(x,t)$  in the cone 
$\{(x,t): \|x\| \le t, x\in \RR^d, t \in \RR_+\}$ is shown to have a unique solution if $u$ and its 
partial derivatives in $x$ are in $L^2(e^{-t})$ on the cone, and the solution can be explicit given in 
the Fourier series of orthogonal polynomials on the cone. This provides a particular solution 
for the boundary value problems of the non-homogeneous wave equation on the cone, 
which can be combined with a solution to the homogeneous wave equation in the cone to 
obtain the full solution. 
\end{abstract}
\maketitle
 
\section{Introduction}
\setcounter{equation}{0}
 
For a fixed constant $c > 0$, let $\VV^{d+1}$ be the cone in $\RR^{d+1}$ defined by 
$$
   \VV^{d+1} = \{ (x,t):  \|x\|^2 \le  c^2 t^2, \, x \in \RR^d, t \in \RR_+\}.  
$$
Our main result shows that a  solution for the non-homogeneous wave equation
\begin{equation}\label{eq:wave-eqn}
        \left(\partial_{tt} - c^2 \Delta_x\right) U(x,t) = e^{-t} f(x,t),
\end{equation} 
can be given explicitly in terms of an orthogonal series for a fairly generic function $f$, and this
solution is unique if $U(x,t) = e^{-t} u(x,t)$ and $u$ satisfies a fairly mild condition. 

To describe our main result, we define the inner product, for $\mu > -1$,  
$$
  \la f,g\ra_\mu: =  b_\mu \int_{\VV^{d+1}} f(x,t) g(x,t) W_\mu(x,t)\d x \d t, \qquad  
      W_\mu(x,t)= (c^2 t^2-\|x\|^2)^\mu e^{-t},
$$
where $b_\mu$ is the constant chosen so that $\la 1,1\ra =1$. A basis of orthogonal polynomials with respect to
this inner product is given in \cite{X} in terms of the Laguerre and Jacobi polynomials and spherical harmonics. 
More precisely, let 
\begin{equation}\label{eq:basisCone2}
  Q_{m, j, \ell}^{n,\mu}(x,t) = L_{n-m}^{2 m + 2\mu + d}(t) t^{2j}  P_j^{(\mu, m-2j+\f{d-2}{2})} 
       \left(2 \frac{\|x\|^2}{c^2 t^2}-1\right) Y_{\ell}^{m-2j}(x), 
\end{equation}
where $\{Y_\ell^{m-2j}\}$  is an orthonormal basis of spherical harmonics of degree $m-2j$ of $d$-variables, 
$0\le j \le m/2$ and $0\le m \le n$. Then $\{Q_{m,j,\ell}^{n,\mu}\}$ is an orthogonal basis of $L^2(\VV^{d+1}, W_\mu)$. 
In particular, the Fourier orthogonal series of $f\in L^2(\VV^{d+1},e^{-t})$ is given by 
$$
    f = \sum_{n=0}^\infty \sum_{m=0}^n \sum_{j=0}^{ \lfloor \f m 2 \rfloor} \sum_{\ell} 
        \wh f_{m, j, \ell}^n Q_{m,j,\ell}^{n,0}, \qquad 
          \wh f_{m, j, \ell}^n = \frac{ \la f, Q_{m,j,\ell}^{n,0}\ra_0}{\la Q_{m,j,\ell}^{n,0}, Q_{m,j,\ell}^{n,0}\ra_0}.
$$ 
The polynomials in \eqref{eq:basisCone2} are in fact well defined when $\mu = -1$ (see Section 2 for details). 
We can now state our main result. 

\begin{thm}\label{thm:mainU}
Let $f \in L^2(\VV^{d+1}, e^{-t})$ be a smooth function. The wave equation 
\begin{equation}\label{eq:wave-eqn1}
        \left(\partial_{tt} - c^2 \Delta_x\right) U(x,t) = e^{-t} f(x,t), \\
\end{equation} 
has a solution $U(x,t) = e^{-t} u(x,t)$, where $u$ is given by 
\begin{align} \label{eq:solu-wave}
   u(x,t) \,& = \sum_{n=0}^\infty \sum_{m=0}^n \sum_{j=0}^{ \lfloor \f m 2 \rfloor}\sum_\ell 
      u_{m,j,\ell}^n Q _{m,j,\ell}^{n,-1}(x,t)
 \end{align}
with the coefficients $u_{m,j,\ell}^n$ determined by  
\begin{equation*}
  u_{m,j,\ell}^n = \frac{1}{a_{m,j}} \sum_{i=0}^{\lfloor \frac{n-m}{2} \rfloor} \frac{(n-m)!}{(n-m-2i)!} 
      \wh f_{m+2i,j+i,\ell}^n,
\end{equation*}
where $a_{m,j} = (2m-2j+d-2)/(2m+d-2)$ and we assume $a_{0,0} = 1$ for $d=2$.
Furthermore, the solution is unique if $u$ and $\partial_{x_j} u$ are  in $L^2 (\VV^{d+1}, e^{-t})$. 
\end{thm}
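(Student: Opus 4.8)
The plan is to reduce the equation to a triangular action of the wave operator between the two polynomial bases and then invert that action. First I would conjugate by $e^{-t}$: using $e^{t}\partial_{tt}\big(e^{-t}\,\cdot\,\big)=(\partial_t-1)^2$ and $e^{t}\Delta_x\big(e^{-t}\,\cdot\,\big)=\Delta_x$, the substitution $U=e^{-t}u$ turns \eqref{eq:wave-eqn1} into the equivalent equation
\[
   \CL u := \big[(\partial_t-1)^2 - c^2\Delta_x\big]u
          = \big(\partial_{tt}-2\partial_t+1-c^2\Delta_x\big)u = f ,
\]
so it suffices to solve $\CL u=f$ with $u$ expanded in the $\mu=-1$ basis $\{Q^{n,-1}_{m,j,\ell}\}$ and $f$ in the orthogonal $\mu=0$ basis $\{Q^{n,0}_{m,j,\ell}\}$.

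The heart of the argument is the identity
\[
   \CL\, Q^{n,-1}_{m,j,\ell}
     = \sum_{i\ge 0}\kappa_{m,j,i}\,Q^{n,0}_{m-2i,\,j-i,\,\ell},
\]
asserting that $\CL$ fixes the total degree $n$ and the harmonic index $\ell$, leaves $m-2j$ unchanged (the degree $m-2j$ of $Y_\ell^{m-2j}$ cannot move, which forces the coupled shift $m\mapsto m-2i,\ j\mapsto j-i$), and raises the weight parameter from $-1$ to $0$. To prove it I would split $\Delta_x$ into its radial and spherical parts, substitute $\Delta_{\SS^{d-1}}Y_\ell^{m-2j}=-(m-2j)(m-2j+d-2)\,Y_\ell^{m-2j}$, and pass to the variable $s=2\|x\|^2/(c^2t^2)-1$; the radial operator then acts on $t^{2j}P_j^{(-1,\beta)}(s)$ through standard Jacobi differentiation and parameter-shifting formulas, which is exactly what raises the Jacobi parameter and lowers $j$. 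At the same time $(\partial_t-1)^2$ acts on the Laguerre factor $L_{n-m}^{2m+d-2}(t)$, and here the $t$-dependence of $s$ couples the two factors; reorganizing the resulting terms with the Laguerre relations $\tfrac{d}{dt}L_k^{\alpha}=-L_{k-1}^{\alpha+1}$, $L_k^{\alpha}=L_k^{\alpha+1}-L_{k-1}^{\alpha+1}$ and the three-term recurrence is where the computation is genuinely delicate. I expect this coupling — proving that all spurious lower-degree terms cancel, so that the image lands in $\mathrm{span}\{Q^{n,0}_{\cdot}\}$ rather than in lower-degree spaces — to be the main obstacle. A cleaner route to the degree preservation, which I would attempt first, is to exhibit the second-order operators $\CD_\mu$ having the $Q^{n,\mu}$ as eigenfunctions with an eigenvalue $\lambda_n$ depending only on $n$, and to show that $\CL$ intertwines $\CD_{-1}$ with $\CD_0$; then $\CL$ maps the $\lambda_n$-eigenspace of $\CD_{-1}$ into that of $\CD_0$ automatically, and only the coefficients $\kappa_{m,j,i}$ remain to be computed.

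Granting the core identity, the coefficient equations are triangular. Matching $\CL u=f$ term by term gives, for fixed $n,\ell$ and fixed $m-2j$, the relation $\wh f^n_{m,j,\ell}=\sum_{i\ge0}\kappa_{m+2i,\,j+i,\,i}\,u^n_{m+2i,\,j+i,\,\ell}$, a unipotent-type upper-triangular system in $m$ whose diagonal entry is $\kappa_{m,j,0}=a_{m,j}$. Inverting it reproduces the stated formula, the factors $(n-m)!/(n-m-2i)!$ being precisely the entries of the inverse, so the remaining task is the finite bookkeeping of checking $\kappa_{m,j,0}=a_{m,j}$ and the off-diagonal pattern. For convergence I would use that $f$ is smooth, which forces rapid decay of $\wh f^n_{m,j,\ell}$; since the change of basis between $\{Q^{n,0}\}$ and $\{Q^{n,-1}\}$ is triangular within each fixed degree $n$, this decay transfers to the $u^n_{m,j,\ell}$ and guarantees that \eqref{eq:solu-wave} converges and may be differentiated termwise, so that $U=e^{-t}u$ is a genuine solution.

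For uniqueness, suppose two solutions in the stated class exist; their difference $v$ satisfies $\CL v=0$ with $v,\partial_{x_j}v\in L^2(\VV^{d+1},e^{-t})$. I would expand $v$ in the $\mu=-1$ basis and pair $\CL v$ against each $Q^{n,0}_{m,j,\ell}$ in $\la\cdot,\cdot\ra_0$, integrating by parts to move $\Delta_x$ onto the polynomial; the hypotheses $v,\partial_{x_j}v\in L^2(e^{-t})$ are exactly what make these pairings finite and kill the lateral boundary contributions on $\|x\|=ct$ and the vertex contribution, while $e^{-t}$ controls the behaviour as $t\to\infty$. Since $\CL v=0$, each pairing vanishes; after integration by parts this yields homogeneous linear relations among the $\mu=-1$ coefficients of $v$, and the triangular, injective structure of $\CL$ established above forces all of them to vanish, whence $v=0$. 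The delicate point here is to justify the integration by parts and the validity of the basis expansion under only the assumed $L^2$ control of $v$ and its first $x$-derivatives.
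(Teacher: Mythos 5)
Your overall strategy coincides with the paper's: conjugate by $e^{-t}$ (your $\CL=(\partial_t-1)^2-\Delta_x$ is exactly the paper's $\CL=e^t(\partial_{tt}-\Delta_x)e^{-t}$), show that $\CL$ maps each $Q^{n,-1}_{m,j,\ell}$ into same-degree combinations of the $Q^{n,0}$ basis with the coupled shift $(m,j)\mapsto(m-2i,j-i)$, invert the resulting triangular system degree by degree, and deduce uniqueness from the validity of the $\mu=-1$ expansion under the stated $L^2$ hypotheses. The first genuine gap is that the central identity is only conjectured in the loose form $\CL Q^{n,-1}_{m,j,\ell}=\sum_{i\ge 0}\kappa_{m,j,i}Q^{n,0}_{m-2i,j-i,\ell}$. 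What the paper proves in \eqref{eq:waveCone} is much sharper: only the terms $i=0,1$ survive, with $\kappa_{m,j,0}=a_{m,j}$ and $\kappa_{m,j,1}=-a_{m,j}b_{m,n}$, $b_{m,n}=(n-m+1)(n-m+2)$. Your claim that inverting the system "reproduces the stated formula" silently uses this bidiagonal structure: the factors $(n-m)!/(n-m-2i)!$ in \eqref{eq:coeff_u} are precisely the products $b_{m+2,n}b_{m+4,n}\cdots b_{m+2i,n}$ obtained by unrolling the two-term recursion \eqref{eq:recur}; for a general multi-term $\kappa$ the inverse has no such closed form. So the step you defer as "finite bookkeeping" is in fact the paper's main computation (the Laplacian and $\partial_{tt}$ lemmas with their Jacobi and Laguerre identities). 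Your intertwining idea with the eigen-operator \eqref{eq:cone-eigenL} is a legitimate alternative way to get degree preservation, but it still leaves all coefficients to be computed.

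The second gap is in the convergence argument, which would fail as written. The change of basis between $\{Q^{n,-1}\}$ and $\{Q^{n,0}\}$ is \emph{not} triangular within a fixed degree $n$: by Proposition~\ref{prop:Q-1toQ0}, $Q^{n,-1}_{m,j,\ell}$ involves $Q^{k,0}$ for $k=n,n-1,n-2$. More seriously, the entries of the inverse of the coefficient system are the factorially large numbers $(n-m)!/(n-m-2i)!$, so rapid decay of $\wh f^{\,n}_{m,j,\ell}$ does not "transfer" to $u^n_{m,j,\ell}$ by triangularity alone; the decay has to be measured against the norms $h_{m,n}$ of \eqref{eq:hnm-norm}, which themselves grow factorially, and one must show that the ratios are summable. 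That is exactly the substance of the paper's $L^2$ estimate: Cauchy--Schwarz followed by the proof that $A_{m,n}=\sum_i (n-m)!/\big((n-m-2i)!\,(n+m)_{2i}\big)\le 3$, via the Chu--Vandermonde identity and the Pfaff transformation. Your proposal contains no mechanism for beating this factorial growth, and the paper in fact needs only $f\in\CW^2(\VV^{d+1},e^{-t})$, far weaker than rapid coefficient decay. Your uniqueness sketch is the paper's argument in spirit; the justification you correctly flag as delicate (why a $W_2^1$ function admits a valid expansion in the non-$L^2$-orthogonal basis $\{Q^{n,-1}\}$) is supplied in the paper not by integration by parts but by the Sobolev orthogonality of Proposition~\ref{prop:SOP}, i.e.\ the $Q^{n,-1}_{m,j,\ell}$ are mutually orthogonal for $\la\cdot,\cdot\ra_\nabla$, so Parseval holds in that inner product.
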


The precise condition on the smoothness of $f$ will be given in Section 3, together with further discussion of
our main results and examples. The uniqueness of the solution holds because of the uniform decaying in 
$t$ variable in $u \in L^2(\VV^{d+1}, e^{-t})$. For example, if $d =3$, then all smooth functions of the form
$$
       U(x,t) = \frac{1}{r} \left[f_1(t-r) + f_2(t+r)\right], \qquad r = \|x\|,
$$ 
satisfy the wave equation $(\partial_{tt} - \Delta_x) U =0$. These functions, however, do not satisfy $e^{t} U(x,t)
 \in W_2^1(\VV^{d+1})$ if $f\ne 0$. 

Our approach is motivated by spectral methods for solving partial differential equations on the unit ball. 
Classical orthogonal polynomials on the unit ball are orthogonal with respect to the weight function
$\varpi_\mu(x) = (1-\|x\|^2)^\mu$ for $\mu > -1$. Let $\CV_n^d(\varpi_\mu)$ denote the space of 
orthogonal polynomials with respect to $\varpi_\mu$ on the unit ball $\BB^d$. An orthogonal basis,
denoted by $P_{j,\ell}^{n,\mu}$ of $\CV_n^d(\varpi_\mu)$ can be given explicitly in terms of the Jacobi
polynomials and spherical harmonics. These polynomials can be extended to $\mu  =-1$ and 
enjoy an orthogonality with respect to an inner product that contains derivatives \cite{X08}. Spectral-Galerkin 
methods based on orthogonal polynomials have been used to solve the Laplace equation or Helmholtz 
equation on the unit disk or the unit ball \cite{ACH1, ACH2, Boyd, LX, Vasil}. As it is shown in \cite{LX}, the Laplace 
operator maps the space $\CV_n^d(\varpi_{-1})$ into $\CV_n(\varpi_0)$. It turns out that elements of 
$\CV_n^d(\varpi_{-1})$ are orthogonal polynomials with respect to an inner product that involves derivatives,
which arises naturally from the weak formulation of the differential equation.  
It is this analogy that leads us to consider the action of thes wave operator on the polynomials 
$Q_{m,j,\ell}^{n, -1}$, which turns out to be an element in $\CV_n(\VV^{d+1}, e^{-t})$ and satisfies 
a surprisingly simple formula. The latter leads us to our main result. 

There are a number of implications of these results in the numerical solution of wave problems.  Typically wave problems are solved numerically by truncation of the domain, at which point one must deal with the introduction of  boundary conditions at the artificially introduced boundary, for example, perfectly matched layers \cite{B}. To quote \cite{Z}\footnote{The reference numbers have been changed to refer to those in this paper.}:

\smallskip
\quotation{There has been significant developments in the treatment of artificial outer boundaries since the 70s, but there is no consensus on an optimal method \cite{G,H}. Especially the construction of boundary conditions for nonlinear problems is difficult \cite{S}.}
\smallskip

\noindent In \cite{Z}, it is advocated to solve hyperbolic PDEs on unbounded space-time geometries by {\it compactification} of the geometry by a coordinate transformation. Instead, our solution approach indicates that it is possible to solve directly, and in closed form, on the unbounded geometry. As the solution cannot propagate faster than the wave speed $c$, we are guaranteed to capture the global solution of any inhomogenous wave problem with compact supported forcing term, provided it can be well-represented in the proposed orthogonal polynomial basis. This solution technique would serve as a useful preconditioner for variable-coefficient wave problems, as in those that arise in iterative solutions of nonlinear wave problems. We note that an efficient Laguerre transform would be required to make this feasible; we mention recent progress on fast Jacobi and spherical harmonics transforms based on recurrence relationships of orthogonal polynomials \cite{TWO,SlJac,Sz} which may be adaptable to this setting.  

One is left with the task of solving the homogeneous wave equation with boundary conditions in the cone. 
In one-spatial dimension this can be accomplished trivially via a traveling wave solution constructed in terms of the boundary data. We leave the higher-dimensional problem as an open problem, though note that the unified transform method has proven useful for deriving explicit integral solutions for linear PDEs in convex geometries \cite{DTV,F}, including the wave equation on the half-plane \cite{DGSV}, and so may be applicable here. 

The paper is organized as follows.  In Section~\ref{sec:OPs}, we consider the orthogonal structure on the cone
and establish the action of wave operators on the cone. The main result is discussed and proved in Section~\ref{sec:main}.

\medskip
\noindent
{\bf Acknowledgment}. The  authors would like to thank the Isaac Newton Institute for Mathematical 
Sciences, Cambridge, for support and hospitality during the programmes ``Approximation, sampling and 
compression in data science" and ``Complex analysis: techniques, applications and computations'',  where work on this paper was undertaken. This work is supported by EPSRC 
grant no EP/K032208/1.

\section{Orthogonal polynomials on the cone}\label{sec:OPs}
\setcounter{equation}{0} 

We discuss orthogonal polynomials on the cone in the first subsection and the action of the wave operator 
on a family of polynomials in the second subsection. 

Throughout this section we assume $c =1$, so that $W_\mu(x,t)  =(t^2-\|x\|^2)^\mu e^{-t}$ and the 
cone is defined by $\VV^{d+1} = \{(x,t): \|x\| \le t, \, x \in \RR^d, t\in \RR_+\}$. The case $c \ne 1$ can
be obtained by a simple dilation $x \to x/c$. 

\subsection{Cone polynomials}
Let $\CV_n(\VV^{d+1},W_{\mu})$ denote the space of orthogonal polynomials of degree $n$ with 
respect to the inner product $\la \cdot,\cdot\ra_\mu$ in the introduction. An orthognal  basis of this space 
can be given in terms of the Laguerre polynomials and orthogonal polynomials on the unit ball. 
First we recall the definition of several families of orthogonal polynomials. 

The Laguerre polynomials $L_n^\a$ are orthogonal with respect to the weight function $t^\a e^{-t}$ on 
$\RR_+ = [0,\infty)$ and it is given by 
$$
  L_n^\a(t) = \frac{(\a+1)_n}{n!} \sum_{k=0}^n \frac{(-n)_k}{k! (\a+1)_k} t^k,
$$ 
where $(a)_k = a(a+1)\ldots(a+k-1)$ is the Pochhammer symbol. Its $L^2$ norm is  
$$
h_n^\a  = \frac{1}{\Gamma(\a+1)}  \int_0^\infty [L_n^\a(t)]^2t^\a e^{-t} \d t = \frac{(\a+1)_n}{n!}.
$$
The Jacobi polynomials $P_n^{(\a,\b)}$ are orthogonal with respect to the weight function $w_{\a,\b}(t)
= (1-t)^\a(1+t)^\b$ on $[-1,1]$ and, in terms of the Gauss hypergeometric function, 
$$
  P_n^{(\a,\b)}(t) = \frac{(\a+1)_n}{n!} {}_2F_1\left(\begin{matrix} -n, n+ \a + \b+1 \\ \a+1\end{matrix}; \frac{1-t}{2}\right).
$$
With normalization constant $c_{\a,\b} = 1/\int_{-1}^1 w_{\a,\b}(t) \d t$, its $L^2$ norm is given by 
$$
  h_n^{(\a,\b)}= c_{\a,\b}\int_{-1}^1 P_n^{(\a,\b)}(t)P_m^{(\a,\b)}(t) w_{\a,\b} (t) \d t = \frac{(\a+1)_n (\b+1)_n(\a+\b+n+1)}{n!(\a+\b+2)_n(\a+\b+n+1)}.
$$

Let $\CV_n^d(\varpi_\mu)$ be the space of orthogonal polynomials with respect to
$$
    \varpi_\mu(x) = (1-\|x\|^2)^\mu, \qquad \mu > -1,
$$ 
on the unit ball $\BB^d$. When $d =1$, the orthogonal polynomial is the Gegenbauer polynomial $C_n^{\mu+\f12}$.
For $d > 1$, these orthogonal polynomials are also classical, see \cite[Section 5.2]{DX}. In particular,
a basis of $\CV_n^d(\varpi_\mu)$ can be given in terms of the Jacobi polynomials and spherical harmonics.
A spherical harmonic $Y$ of $d$-variables is a homogeneous polynomial that satisfies $\Delta Y = 0$, 
where $\Delta$ is the Laplace operator of $\RR^d$. They are orthogonal on the unit sphere with respect to 
the inner product
$$
  \la f,g\ra_{\sph} = \frac{1}{\o_d} \int_{\sph} f(\xi)g(\xi) \d \s(\xi),
$$ 
where $\o_d$ is the surface area of $\sph$ and $\d\s$ is the Lebesgue measure on the sphere. For 
$m =0,1,2,\ldots$, let $\CH_m^d$ be the space of spherical harmonics of degree $n$ in $d$ variables. 
It is know that $\dim \CH_n^d = \binom{n+d-1}{d} = \binom{n+d-3}{d-1}$. For $0 \le j \le m/2$, let  
$\{Y_{\ell}^{m-2j}:  1 \le \ell \le \dim \CH_{m-2j}^d\}$ be an orthonormal basis of $\CH_{m-2j}^d$. Define 
\begin{equation}\label{eq:basisBall}
 P_{j,\ell}^m(\varpi_\mu;x) := P_j^{(\mu, m-2j+\f{d-2}{2})} (2 \|x\|^2-1) Y_{\ell}^{m-2j}(x), \qquad x \in \BB^d.
\end{equation}
Then $\{ P_{j,\ell}^m(\varpi_\mu; \cdot): 0\le j \le m/2, \,\, 1 \le \ell \le \dim \CH_n^d\}$ is an orthogonal 
basis of $\CV_n^d(\varpi_\mu)$ \cite[(5.2.4)]{DX} for $d > 1$. 
 
We are now ready to define our orthogonal basis of polynomials for the space $\CV_n(\VV^{d+1},W_{\mu})$ 
on the cone. For $0 \le m \le n$, define
\begin{equation}\label{eq:basisCone}
  Q_{m, j, \ell}^{n,\mu}(x,t) = L_{n-m}^{2 m + 2\mu + d}(t) t^m P_{j,\ell}^m \left(\varpi_\mu;\frac{x}{t}\right), 
\end{equation}
where $P_{j,\ell}^n$ is the basis defined in \eqref{eq:basisBall} for $d > 1$ and $C_m^{\mu+\f12}$ when
$d =1$. Using the fact that $Y_\ell^{n-2m}$ is 
homogeneous, we see that this is the same as the expression in \eqref{eq:basisCone2}. The $L^2$ 
norm of the polynomial can be deduced from the norms of the Laguerre and the Jacobi polynomials, 
which gives
\begin{align}
    b_\mu \int_{\VV^{d+1}} & |Q_{m,j,\ell}^{n,\mu}(x,t)|^2 W_\mu(x,t) \d x \d t  \\
        & = \frac{(2\mu+d+1)_{n+m} 
              (\mu+1)_j (\frac{d}{2})_{m-j}(\mu+m-j+\f{d}{2})} {(n-m)! j! (\mu+ \frac{d}{2}+1)_{m-j} (\mu+m+\f{d}{2})}. \notag
\end{align}
In particular, for $\mu =0$, we obtain
\begin{equation}\label{eq:hnm-norm}
  h_{m,n}: = b_0 \int_{\VV^{d+1}} |Q_{m,j,\ell}^{n,0}(x,t)|^2 e^{-t} \d x \d t = \frac{d (d+1)_{m+n}}{(2 m+d)(n-m)!}. 
\end{equation}
It should be mentioned that these polynomials are eigenfunctions of a second order differential operator
\cite{X}: Every $u \in \CV_n(\VV^{d+1}, W_{\mu})$ satisfies the differential equation
\begin{align}\label{eq:cone-eigenL}
    \left[ t \left( \Delta_x+ \partial_t^2 \right) + 2  \la x,\nabla_x \ra \partial_t   - \la x,\nabla_x\ra 
        + (2\mu + d +1 -t) \partial_t \right] u = - n u.
  \end{align} 

Analytic continuation shows that the polynomials $Q_{m,j,\ell}^{n,\mu}$ are well-defined if $\mu < -1$ and $\mu$ 
is not an integer. If $\mu = - k$ is a negative integer, then these polynomials are well defined for $j \ge k$ by 
using the formula \cite[(4.22.2)]{Sz},
\begin{equation}\label{eq:Jacobi-negative}
 \binom{n}{k} P_j^{(-k,\b)}(s) =  \binom{j+\b}{k} \left( \frac{s-1}{2}\right)^k P_{j-k}^{(k,\b)}(s), \quad 1 \le k \le j.
\end{equation}
Particularly important for our purpose is that $Q_{m,j,\ell}^{n, -1}$ is well defined for all $j$ if we 
define $P_0^{(-1,\b)}(t) =1$. Indeed, using \eqref{eq:Jacobi-negative} and setting $\b_j:= m-2j+\f{d-2}{2}$, 
we have
\begin{align} \label{eq:basis-1}
\begin{split}
  Q_{m, 0, \ell}^{n,-1}(x,t) &\, = L_{n-m}^{2 m + d-2}(t) Y_{\ell}^{m}(x), \\
   Q_{m, j, \ell}^{n,-1}(x,t) &\, = \frac{j+\b_j}{j} 
     L_{n-m}^{2 m + d-2}(t) t^{2j-2}  \\ 
      &\, \quad   \times (t^2-\|x\|^2)P_{j-1}^{(1,\b_j)} \left(2 \frac{\|x\|^2}{t^2} -1\right)Y_{\ell}^{m-2j}(x), \quad j \ge 1.
\end{split}
\end{align} 
These polynomials are no longer orthogonal with respect to a weight function in the $L^2$ sense, but they
are orthogonal in a Sobolev space. Let us define
$$
  \la f,g\ra_\nabla := \int_{\VV^{d+1}} \nabla_x f(x,t) \cdot\nabla_x g(x,t) \d x e^{-t} \d t 
     + \lambda \int_{\VV_0^{d+1}} f(x,t) g(x,t) t^{-1} e^{-t} \d \s(x,t), 
$$
where $\nabla_x$ is the gradient in $x$ variables, $\l > 0$ is a constant and $\d \s$ is the surface measure 
on the surface $\VV_0^{d+1}$ of the cone. It is easy to see that this is an inner product in the Sobolev space 
\begin{equation} \label{eq:W21}
W_2^1(\VV^{d+1}): = \left\{f \in L^2(\VV^{d+1}, e^{-t}): \int_{\VV^{d+1} } |\nabla_x f|^2 e^{-t} \d x \d t < \infty\right \}.
\end{equation} 

\begin{prop} \label{prop:SOP}
The polynomials $Q_{m,j,\ell}^{n,-1}$ are mutually orthogonal with respect to the inner product 
$\la \cdot,\cdot\ra_\nabla$.  
\end{prop}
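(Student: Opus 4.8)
The plan is to factor the cone inner product along the product structure $Q_{m,j,\ell}^{n,-1}(x,t)=L_{n-m}^{2m+d-2}(t)\,t^m P_{j,\ell}^m(\varpi_{-1};x/t)$, reducing $\langle\cdot,\cdot\rangle_\nabla$ to a Laguerre integral in $t$ multiplied by a gradient pairing of ball polynomials that is already known to be diagonal. Write $Q=Q_{m,j,\ell}^{n,-1}$ and $Q'=Q_{m',j',\ell'}^{n',-1}$.

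First I would treat the volume term $I:=\int_{\VV^{d+1}}\nabla_x Q\cdot\nabla_x Q'\,e^{-t}\,\d x\,\d t$. For fixed $t$ substitute $x=ty$ with $y\in\BB^d$; since $\d x=t^d\,\d y$ and $\nabla_x=t^{-1}\nabla_y$, the slice integral becomes $t^{\,m+m'+d-2}L_{n-m}^{2m+d-2}(t)L_{n'-m'}^{2m'+d-2}(t)\,G$, where $G:=\int_{\BB^d}\nabla_y P_{j,\ell}^m(\varpi_{-1};y)\cdot\nabla_y P_{j',\ell'}^{m'}(\varpi_{-1};y)\,\d y$ does not depend on $t$. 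Integrating against $e^{-t}\,\d t$ leaves $I=G\int_0^\infty L_{n-m}^{2m+d-2}(t)L_{n'-m'}^{2m'+d-2}(t)\,t^{\,m+m'+d-2}e^{-t}\,\d t$.

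Next I would show $G$ is diagonal in the multi-index $(m,j,\ell)$. By \eqref{eq:Jacobi-negative} one has $P_j^{(-1,\beta)}(1)=0$ for $j\ge1$ while $P_0^{(-1,\beta)}=1$, so the trace of $P_{j,\ell}^m(\varpi_{-1};\cdot)$ on $\sph$ equals $Y_\ell^m$ when $j=0$ and vanishes when $j\ge1$; consequently $\int_{\sph}P_{j,\ell}^m(\varpi_{-1};\xi)P_{j',\ell'}^{m'}(\varpi_{-1};\xi)\,\d\sigma(\xi)$ is nonzero only at full coincidence $(m,j,\ell)=(m',j',\ell')$. Since the $\varpi_{-1}$ ball polynomials are mutually orthogonal with respect to a Sobolev inner product of the form $c_1\int_{\sph}fg\,\d\sigma+c_2\int_{\BB^d}\nabla f\cdot\nabla g\,\d y$ \cite{X08,LX}, whose boundary part is thus already diagonal, off the diagonal the relation $0=c_1\cdot 0+c_2 G$ forces $G=0$. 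In particular $G=0$ unless $m=m'$, and when $m=m'$ the integral in $I$ carries the weight $t^{2m+d-2}e^{-t}$, whose exponent matches the Laguerre parameter $2m+d-2$, so Laguerre orthogonality makes $I$ vanish unless $n=n'$ as well. Hence $I$ alone already vanishes off the diagonal.

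Finally I would handle the boundary term $II:=\lambda\int_{\VV_0^{d+1}}QQ'\,t^{-1}e^{-t}\,\d\sigma$. By \eqref{eq:basis-1} the trace of $Q_{m,j,\ell}^{n,-1}$ on $\VV_0^{d+1}$ inherits the factor $t^2-\|x\|^2$ when $j\ge1$ and so vanishes there; only the polynomials $Q_{m,0,\ell}^{n,-1}=L_{n-m}^{2m+d-2}(t)Y_\ell^m$ contribute. Parametrising $\VV_0^{d+1}$ by $(t\xi,t)$ with $\d\sigma=\sqrt2\,t^{d-1}\,\d\sigma(\xi)\,\d t$ and using $Y_\ell^m(t\xi)=t^mY_\ell^m(\xi)$, the factor $t^{-1}$ once more produces $t^{2m+d-2}e^{-t}$, so $II$ is a constant multiple of $\int_{\sph}Y_\ell^mY_{\ell'}^{m'}\,\d\sigma(\xi)$ times a Laguerre integral, diagonal in both $(m,\ell)$ and $n$. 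Adding the two diagonal contributions yields the asserted mutual orthogonality, the diagonal value being strictly positive. I expect the main obstacle to lie in the second step: arranging the reduction so that the cited ball Sobolev orthogonality applies, which depends both on recognising that the boundary-trace pairing is itself diagonal and on the exact matching of the Laguerre parameter $2m+d-2$ with the power of $t$ produced by the Jacobian $t^d$ and the surface element, it being this bookkeeping that turns the residual one-dimensional integrals into genuine orthogonality relations rather than arbitrary Laguerre moments.
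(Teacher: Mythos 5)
Your proof is correct, and it rests on the same two pillars as the paper's own argument: the substitution $x=ty$, which factors each pairing into a Laguerre integral in $t$ times a pairing of the ball polynomials $P_{j,\ell}^{m}(\varpi_{-1};\cdot)$, and the Sobolev orthogonality of those ball polynomials from \cite{X08}. The difference is organizational, in how the two summands of $\la\cdot,\cdot\ra_\nabla$ are handled. The paper observes that the volume term and the boundary term produce the \emph{same} Laguerre factor $\int_0^\infty t^{m+m'+d-2}L_{n-m}^{2m+d-2}(t)L_{n'-m'}^{2m'+d-2}(t)e^{-t}\,\d t$ (the Jacobian $t^d$ and the two factors $t^{-1}$ from $\nabla_x=t^{-1}\nabla_y$, respectively the surface element $t^{d-1}$ and the weight $t^{-1}$, conspire to give the same power of $t$), so the whole inner product factors at once as this Laguerre integral times the ball inner product $\la\cdot,\cdot\ra_{\BB}$ of \cite{X08}, and that citation applies verbatim---this is the entire proof. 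You instead diagonalize the volume and boundary terms separately, which costs you the extra step of prying the gradient-only pairing $G$ loose from the cited combined inner product via the trace-vanishing observation $P_j^{(-1,\b)}(1)=0$ for $j\ge1$ (a correct consequence of \eqref{eq:Jacobi-negative}), together with a separate direct diagonalization of the boundary term. What your longer route buys is a finer statement: each of the two terms in $\la\cdot,\cdot\ra_\nabla$ is separately diagonal on the basis, and the boundary term only sees the $j=0$ polynomials. What the paper's bundled factorization buys is brevity and the fact that no property of the ball polynomials beyond the quoted orthogonality is ever needed.
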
 

\begin{proof}
By \cite{X08}, the polynomial $Q_{m,j,\ell}^{n,-1}$ can be written as 
$$
 Q_{m, j, \ell}^{n,-1}(x,t) =  L_{n-m}^{2 m + d-2}(t) t^m P_{j,\ell}^{m,-1}\left(\frac{x}{t}\right),
$$
where $P_{j,\ell}^{m,-1}$ are mutually orthogonal polynomials with respect to the inner product 
$$
  \la f,g \ra_{\BB}:= \int_{\BB^d} \nabla f(x) \cdot\nabla g(x) \d x  
       + \lambda \int_{\sph} f(\xi) g(\xi) \d \s(\xi) 
$$
on the unit ball. Changing variable $x = t y$, so that $\nabla_x  = t^{-1} \nabla_y$, it follows that 
\begin{align*}
  \la Q_{m, j, \ell}^{n,-1},Q_{m', j', \ell'}^{n',-1} \ra_{\nabla} &  = \int_0^\infty t^{m+m'+d-2}
        L_{n-m}^{2m+d-2}(t) L_{n'-m'}^{2m'+d-2}(t) e^{-t} \d t  \la P_{j,\ell}^{m,-1} P_{j',\ell'}^{m',-1}\ra_{\BB},
\end{align*}
from which the orthogonality of $Q_{m,j,\ell}^{n,-1}$ follows readily. 
\end{proof}

We can also writing the polynomial $Q_{m,j,\ell}^{n,-1}$ as a sum of $Q_{m,j,\ell}^{k,0}$, which turns out to 
have a fairly simple form of merely six terms with rational coefficients.  Let $a_{m,j}$ be defined as in \eqref{eq:amj}. 

\begin{prop} \label{prop:Q-1toQ0}
For $0\le j \le m/2$ and $0 \le m \le n$, 
\begin{align*}
  Q_{m,j,\ell}^{n,-1} = a_{m,j} & \left[\big(Q_{m,j,\ell}^{n,0} - b_{m,n} Q_{m-2,j-1,\ell}^{n,0}\big) 
    - 2 \big( Q_{m,j,\ell}^{n-1,0} -c_{m,n}  Q_{m-2,j-1,\ell}^{n-1,0} \big)  \right.  \\
   \qquad     & \, \left. + \big(Q_{m,j,\ell}^{n-2,0} - d_{m,n}Q_{m-2,j-1,\ell}^{n-2,0}\big)\right].
\end{align*}
where 
\begin{equation} \label{eq:amj}
  a_{m,j} =  \frac{2m-2 j+d-2}{2m+d-2} \quad \hbox{and}\quad b_{m,n} = (n-m+1)(n-m+2),
\end{equation}
and we assume $a_{0,0} =1$ when $d=2$, and 
$$
c_{m,n} = (n+m+d-2)(n-m+1)\quad \hbox{and} \quad d_{m,n} =  (n+m+d-2)(n+m+d-3).
$$
\end{prop}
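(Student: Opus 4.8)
The plan is to reduce the claimed identity to elementary one-variable identities for the Laguerre and Jacobi factors. Writing $x = r\xi$ with $r = \|x\|$, $\xi \in \sph$, and setting $s = \|x\|^2/t^2$ and $\beta_j := m-2j+\tfrac{d-2}{2}$, each polynomial appearing is, by \eqref{eq:basisCone2}, a product of a Laguerre polynomial in $t$, a power of $t$, a Jacobi polynomial in $s$, and a spherical harmonic. The key structural observation is that $Q_{m,j,\ell}^{k,0}$ and $Q_{m-2,j-1,\ell}^{k,0}$ carry the \emph{same} spherical harmonic, since $(m-2)-2(j-1)=m-2j$, and that the Jacobi factor of $Q_{m-2,j-1,\ell}^{k,0}$ is $P_{j-1}^{(0,\beta_j)}$ with the \emph{same} parameter $\beta_j$ as that of $Q_{m,j,\ell}^{n,-1}$. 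Hence, after dividing through by the common factor $Y_\ell^{m-2j}(x)\,t^{2j-2}$, it suffices to verify an identity between functions of the two variables $t$ and $s$: on the left $L_{n-m}^{2m+d-2}(t)\,t^2\,P_j^{(-1,\beta_j)}(2s-1)$, and on the right combinations of $L_{k-m}^{2m+d}(t)\,t^2\,P_j^{(0,\beta_j)}(2s-1)$ and $L_{k-m+2}^{2m+d-4}(t)\,P_{j-1}^{(0,\beta_j)}(2s-1)$ for $k\in\{n,n-1,n-2\}$.

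First I would dispose of the $s$-dependence with a single Jacobi connection formula. Using the reduction \eqref{eq:Jacobi-negative} one sees that $P_j^{(-1,\beta_j)}$ has degree $j$, so it must be a two-term combination $P_j^{(-1,\beta)} = A_j P_j^{(0,\beta)} + B_j P_{j-1}^{(0,\beta)}$, and the coefficients are pinned down painlessly. Comparing degree-$j$ (leading) coefficients gives $A_j = \binom{2j+\beta-1}{j}\big/\binom{2j+\beta}{j} = \frac{j+\beta}{2j+\beta}$, and evaluating at the endpoint $u=1$, where $P_j^{(-1,\beta)}(1)=\binom{j-1}{j}=0$ and $P_i^{(0,\beta)}(1)=1$, forces $B_j=-A_j$. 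Since $\beta=\beta_j$ gives $\frac{j+\beta_j}{2j+\beta_j} = \frac{2m-2j+d-2}{2m+d-2} = a_{m,j}$, this yields
$$
P_j^{(-1,\beta_j)}(u) = a_{m,j}\big[P_j^{(0,\beta_j)}(u) - P_{j-1}^{(0,\beta_j)}(u)\big], \qquad j\ge 1,
$$
while $j=0$ is trivial. This already produces the overall factor $a_{m,j}$ and splits the right-hand side into the two columns indexed by $P_j^{(0,\beta_j)}$ and $P_{j-1}^{(0,\beta_j)}$.

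It then remains to match the Laguerre factors column by column, and this is where the bookkeeping concentrates. For the $P_j^{(0,\beta_j)}$ column I would raise the Laguerre parameter from $2m+d-2$ to $2m+d$ by iterating the contiguous relation $L_p^{\alpha}=L_p^{\alpha+1}-L_{p-1}^{\alpha+1}$ twice, giving $L_{n-m}^{2m+d-2} = L_{n-m}^{2m+d} - 2L_{n-m-1}^{2m+d} + L_{n-m-2}^{2m+d}$ and hence the constant coefficients $1,-2,1$ on $Q_{m,j,\ell}^{n,0}, Q_{m,j,\ell}^{n-1,0}, Q_{m,j,\ell}^{n-2,0}$. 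For the $P_{j-1}^{(0,\beta_j)}$ column I must instead expand $t^2 L_{n-m}^{2m+d-2}(t)$ in the family $\{L_{k-m+2}^{2m+d-4}\}$; iterating $t L_p^{\alpha+1}(t)=(p+\alpha+1)L_p^{\alpha}(t)-(p+1)L_{p+1}^{\alpha}(t)$ twice lowers the parameter by $2$ and produces exactly three terms with quadratic-in-$n$ coefficients. The main obstacle is precisely this last step: one must carry out the double $t\cdot$ recursion carefully and check that, with $p=n-m$ and top parameter $2m+d-2$, the three coefficients collapse to $(n-m+1)(n-m+2)=b_{m,n}$, $(n-m+1)(n+m+d-2)=c_{m,n}$, and $(n+m+d-2)(n+m+d-3)=d_{m,n}$, attached to $k=n,n-1,n-2$ respectively. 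Reassembling the two columns and regrouping by degree $k$ then reproduces the stated six-term formula. Finally I would treat the degenerate cases separately, namely $j=0$ (no second column) and $d=2$, $m=j=0$, where $a_{0,0}$ is the removable $0/0$ set to $1$.
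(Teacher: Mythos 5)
Your proposal is correct, but it takes a genuinely different route from the paper's. The paper proceeds by orthogonality: it expands $Q_{m,j,\ell}^{n,-1}$ in the basis $\{Q_{m',j',\ell'}^{n',0}\}$ and computes the Fourier coefficients $\la Q_{m,j,\ell}^{n,-1}, Q_{m',j',\ell'}^{n',0}\ra_0/h_{m',n'}$, using the spherical-harmonic factor to force $m-2j=m'-2j'$, the connection formula $P_j^{(-1,\b)}=\frac{j+\b}{2j+\b}\big(P_j^{(0,\b)}-P_{j-1}^{(0,\b)}\big)$ (cited from DLMF (18.9.5)) to restrict to $j'\in\{j,j-1\}$, and then evaluating the remaining one-dimensional Laguerre integrals by applying $L_k^\a=L_k^{\a+1}-L_{k-1}^{\a+1}$ twice; the final norm evaluations are omitted. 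You instead prove the identity directly at the level of polynomials, with no integration at all: after cancelling the common factor $Y_\ell^{m-2j}(x)\,t^{2j-2}$ you split via the same Jacobi connection formula into two columns and match Laguerre factors, using the parameter-raising relation for the $P_j^{(0,\b_j)}$ column and the relation $tL_p^{\a+1}(t)=(p+\a+1)L_p^{\a}(t)-(p+1)L_{p+1}^{\a}(t)$, iterated twice, for the $P_{j-1}^{(0,\b_j)}$ column. That last computation is exactly what produces the stated coefficients: with $p=n-m$ and starting parameter $2m+d-2$ one gets $(p+1)(p+2)=b_{m,n}$, $-2(p+1)(p+2m+d-2)=-2c_{m,n}$ and $(p+2m+d-2)(p+2m+d-3)=d_{m,n}$ attached to $k=n,\,n-1,\,n-2$ respectively, as you claim. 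Your route buys self-containedness — no norm computations, and it shows constructively why exactly six terms appear — at the cost of invoking the multiplication-by-$t$ contiguous relation; the paper's route gets the six-term structure for free from orthogonality but leaves the integral evaluations to the reader.

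One small patch is needed in your derivation of the connection formula: you assert that, since $P_j^{(-1,\b_j)}$ has degree $j$, it ``must be a two-term combination'' of $P_j^{(0,\b_j)}$ and $P_{j-1}^{(0,\b_j)}$. Degree alone only gives a $(j+1)$-term expansion. To justify two terms, observe from \eqref{eq:Jacobi-negative} that $P_j^{(-1,\b)}$ equals $(s-1)P_{j-1}^{(1,\b)}(s)$ up to a constant, hence is orthogonal to every polynomial of degree at most $j-2$ with respect to the weight $(1+s)^{\b}$ on $[-1,1]$, so all lower coefficients vanish; alternatively, simply cite DLMF (18.9.5) as the paper does. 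With that repaired, your leading-coefficient and endpoint-evaluation argument correctly pins down both coefficients, and the remainder of your proof goes through, including the degenerate cases $j=0$ and $d=2$, $m=j=0$.
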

 
\begin{proof}
Expanding $Q_{m,j,\ell}^{n,-1}$ in terms of $\{Q_{m,j,\ell}^{k,0}\}$, the coefficients of the expansion can be
computed from 
$$
    \frac{\la Q_{m,j,\ell}^{n,-1}, Q_{m',j',\ell}^{n',0} \ra}{h_{m',n'}} = \frac{d}{h_{m',n'}
       \Gamma(d+1)}\int_{\VV^{d+1}} Q_{m,j,\ell}^{n,-1}(x,t)Q_{m',j',\ell}^{n',0}(x,t) e^{-t} \d x \d t.
$$
By the orthogonality of $Y_\ell^{m-2j}$, we can assume $m - 2j = m' - 2j'$. In particular, $\b_j = m-2j + \f{d-2}{2}$ 
remains unchanged when $(m,j)$ is replaced by $(m',j')$. Hence, separating variables, the integral in the right-hand 
side becomes a product of two integrals of one-variable. The first one is 
$$
   d \int_0^1 P_j^{(-1,\b_j)}(2r^2-1) P_j^{(0,\b_j)}(2r^2-1) r^{2\b_j+1} \d r,
$$
which can be evaluated by setting $s= 2r^2-1$ and using \cite[(18.9.5)]{DLMF}
$$
  P_j^{(-1,\b)}(s) = \frac{j+\b}{2j+\b} \left( P_j^{(0,\b)}(s)- P_{j-1}^{(0,\b)}(s)\right).
$$
These lead to two cases, $j'=j$ or $j' = j-1$. If $j' = j$ then $m'=m$ by $m - 2j = m' - 2j'$, whereas if $j' = j-1$
then $m'=m-2$. In each case the second integral, given by
$$
  \frac{1}{\Gamma(d+1)}\int_0^\infty L_{n-m}^{2m+d-2}(t) L_{n'-m'}^{2m'+d}(t)^2 t^{m+m'+d} e^{-t} dt,
$$
can be evaluated by applying the identity 
$$
   L_k^{\a}(t) = L_k^{\a+1}(t) - L_{k-1}^{\a+1}(t)
$$ 
twice. For $m' = m$, we apply the identity for $\a = 2m+d-2$. For $m'=m-2$, we apply the identity for $\a = 2m+d-4$. 
In all cases, the integrals involved reduce to the norm of the Jacobi polynomials or the Laguerre polynomials. 
We omit the details.  
\end{proof} 

\subsection{Wave operator on the cone polynomials}

In this subsection we study the action of the wave operator on the polynomials $Q_{m,j,\ell}^{n, -1}$. We
start with the action of Laplace operator. 

\begin{lem}
For $1 \le j \le m/2$, 
\begin{equation}\label{eq:LaplaceCone}
  \Delta_x \left[Q_{m,j,\ell}^{n,-1}(x,t)\right] = -4 (m-j+\tfrac{d}{2})^2 Q_{m-2,j-1,\ell}^{n-2,1}(x,t).
\end{equation}
\end{lem}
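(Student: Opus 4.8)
The plan is to remove the Laguerre factor, rescale $x$ to land on the unit ball, and reduce to a one–variable Jacobi identity.

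By \eqref{eq:basisCone}, $Q_{m,j,\ell}^{n,-1}(x,t)=L_{n-m}^{2m+d-2}(t)\,t^{m}P_{j,\ell}^{m}(\varpi_{-1};x/t)$, and the Laguerre factor is independent of $x$. For fixed $t$, substituting $y=x/t$ gives $\Delta_x\big[t^{m}P_{j,\ell}^{m}(\varpi_{-1};x/t)\big]=t^{m-2}\,\big(\Delta P_{j,\ell}^{m}(\varpi_{-1})\big)(x/t)$, where $\Delta$ is the Laplacian on $\BB^d$. Likewise $Q_{m-2,j-1,\ell}^{n-2,1}(x,t)=L_{n-m}^{2m+d-2}(t)\,t^{m-2}P_{j-1,\ell}^{m-2}(\varpi_1;x/t)$, and a direct check shows that the Laguerre index $2m+d-2$, the spherical–harmonic degree $m-2j$, and the Jacobi parameter $\beta_j=m-2j+\tfrac{d-2}{2}$ all remain unchanged in passing from $(m,j)$ with $\mu=-1$ to $(m-2,j-1)$ with $\mu=1$. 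Hence the lemma reduces to an identity on the ball of the form $\Delta P_{j,\ell}^{m}(\varpi_{-1};y)=\gamma\,P_{j-1,\ell}^{m-2}(\varpi_1;y)$ with a scalar $\gamma$, and it remains to evaluate $\gamma$.

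To compute $\gamma$, write $P_{j,\ell}^{m}(\varpi_{-1};y)=\phi(\|y\|^2)\,Y_\ell^{m-2j}(y)$. Using $\Delta(fg)=(\Delta f)g+2\nabla f\cdot\nabla g+f\Delta g$ together with $\Delta Y_\ell^{m-2j}=0$ and the Euler relation $y\cdot\nabla Y_\ell^{m-2j}=(m-2j)Y_\ell^{m-2j}$, the Laplacian of a radial–times–harmonic function reduces to
\begin{equation*}
  \Delta\big[\phi(\|y\|^2)Y_\ell^{k}\big]=\big[4s\,\phi''(s)+(4k+2d)\phi'(s)\big]Y_\ell^{k}(y),\qquad s=\|y\|^2,\ k=m-2j.
\end{equation*}
The essential step is to substitute the factored form \eqref{eq:basis-1}, that is $\phi(s)=\tfrac{j+\beta_j}{j}(1-s)P_{j-1}^{(1,\beta_j)}(2s-1)$: the explicit factor $(1-s)$ is what makes the computation close, since under $w=2s-1$ it becomes $\tfrac{1-w}{2}$ and combines with the coefficient $(1-w^2)$ of the second derivative in the Jacobi differential equation for $g:=P_{j-1}^{(1,\beta_j)}$.

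Carrying this out, I would convert the radial operator to the variable $w$, use the Jacobi equation to replace the $g''$ term, and expect the resulting first–order terms in $g'$ to cancel identically; this cancellation is precisely where the relation $\beta_j-(m-2j)=\tfrac{d-2}{2}$ enters. What survives is a constant times $g(w)$, i.e.\ a constant times $P_{j-1,\ell}^{m-2}(\varpi_1;y)$, and this constant is $\gamma$. The main obstacle is the bookkeeping in this last step: one must combine the normalization $\tfrac{j+\beta_j}{j}$, the Jacobi eigenvalue $(j-1)(j+\beta_j+1)$, and the term $4k+2d$, and verify that they collapse into the single square appearing on the right–hand side. As a consistency check, the excluded case $j=0$ is immediate, since then $Q_{m,0,\ell}^{n,-1}=L_{n-m}^{2m+d-2}(t)Y_\ell^{m}(x)$ is harmonic in $x$, so $\Delta_xQ_{m,0,\ell}^{n,-1}=0$.
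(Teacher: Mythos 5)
Your reduction to the ball is correct and coincides with the first half of the paper's own proof: $\Delta_x$ passes over the Laguerre factor, the scaling $x=ty$ gives $\Delta_x\big[t^mP(x/t)\big]=t^{m-2}(\Delta P)(x/t)$, and your check that the Laguerre index $2m+d-2$ and degree $n-m$, the harmonic degree $m-2j$, and the parameter $\beta_j$ are shared by $Q_{m,j,\ell}^{n,-1}$ and $Q_{m-2,j-1,\ell}^{n-2,1}$ is exactly what identifies the answer as a scalar multiple of $Q_{m-2,j-1,\ell}^{n-2,1}$. Where you part ways with the paper is the remaining ball identity $\Delta P_{j,\ell}^{m}(\varpi_{-1})=\gamma\,P_{j-1,\ell}^{m-2}(\varpi_{1})$: the paper obtains it, constant included, simply by citing \cite[Lemma 3.2]{LX}, whereas you set out to derive it. But your derivation stops at precisely the decisive step: you say you \emph{would} pass to $w=2s-1$, you \emph{expect} the $g'$-terms to cancel, and you still \emph{must verify} that the constants collapse to a single square --- none of which is carried out. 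Since the entire content of the lemma is that the Laplacian produces a single basis element with one explicit constant, leaving $\gamma$ uncomputed is a genuine gap: as written, this is a program for a proof, not a proof.

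The program does close, and it is worth recording how. With $k=m-2j$, write $\phi(s)=\tfrac{j+\beta_j}{j}(1-s)\,g(2s-1)$, $g:=P_{j-1}^{(1,\beta_j)}$, as in \eqref{eq:basis-1}; using $1-s=\tfrac{1-w}{2}$ and $4k+2d=4(\beta_j+1)$, your radial formula becomes
\begin{equation*}
  4s\,\phi''(s)+(4k+2d)\,\phi'(s)
  =4\tfrac{j+\beta_j}{j}\left\{(1-w^{2})g''-2(1+w)g'+(\beta_j+1)\left[(1-w)g'-g\right]\right\}.
\end{equation*}
Eliminating $g''$ by the Jacobi equation $(1-w^{2})g''=-\left[\beta_j-1-(\beta_j+3)w\right]g'-(j-1)(j+\beta_j+1)g$, the total coefficient of $g'$ is $\left[-(\beta_j-1)-2+(\beta_j+1)\right]+\left[(\beta_j+3)-2-(\beta_j+1)\right]w=0$, so the cancellation you anticipated does occur, and what survives is $-4\tfrac{j+\beta_j}{j}\left[(j-1)(j+\beta_j+1)+(\beta_j+1)\right]g=-4(j+\beta_j)^{2}g$, since $(j-1)(j+\beta_j+1)+\beta_j+1=j(j+\beta_j)$. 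Hence $\gamma=-4(j+\beta_j)^{2}=-4\left(m-j+\tfrac{d-2}{2}\right)^{2}$. Note that this agrees with the constant in the paper's own proof (which quotes \cite{LX}) but \emph{not} with the constant $-4(m-j+\tfrac{d}{2})^{2}$ displayed in the lemma; the exponent printed there is a typo for $\tfrac{d-2}{2}$, as the case $d=2$, $m=2$, $j=1$ confirms: there $\phi(\|y\|^{2})Y_\ell^{0}=1-\|y\|^{2}$, whose Laplacian is $-4=-4(m-j+\tfrac{d-2}{2})^{2}$, not $-16$. So your strategy and every ingredient you list are sound, but the computation above is exactly what your proposal was required to supply and did not.
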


\begin{proof}
The orthogonal polynomials $P_{j,\ell}^m(\varpi_{-1})$ satisfies \cite[Lemma 3.2]{LX}
$$
   \Delta P_{j,\ell}^m(\varpi_{-1}) = - 4(m-j+\tfrac{d-2}{2})^2 P_{j-1,\ell}^{m-2}(\varpi_1), \qquad j \ge 1. 
$$
Hence, by \eqref{eq:basisCone}, 
\begin{align*}
  \Delta_x \left[Q_{m, j,\ell}^{n,-1}(x,t)\right] = &\, L_{n-m}^{2m+d-2}(t) t^{m-2} (\Delta P_{j,\ell}^m)\left(\frac{x}{t}\right) \\
   =  &\, - 4(m-j+\tfrac{d-2}{2})^2 L_{n-m}^{2m+d-2}(t) t^{m-2} P_{j-1,\ell}^{m-2}(\varpi_1),
\end{align*}
which equals to the right-hand side of \eqref{eq:LaplaceCone} by \eqref{eq:basisCone}.
\end{proof}

Next we consider the action of $\partial_{tt}$. For $j \ge 1$, we use \eqref{eq:basis-1} and write 
\begin{equation} \label{eq:Qj>0}
e^{-t} Q_{m, j, \ell}^{n,-1}(x,t) = \frac{j+\b_j}{j}  e^{-t} L_{n-m}^{2 m + d-2}(t) H_{j-1}^{(\b_j)} (x,t)Y_{\ell}^{m-2j}(x), 
\end{equation} 
where
$$
 H_{j}^{(\b)} (x,t):= (t^2- \|x\|^2) t^{2j} P_j ^{(1,\b)} \left(2 \frac{\|x\|^2}{t^2} -1\right). 
$$
Applying the Lebnitz rule, we will work out the action of $\partial_{tt}$ by considering
\begin{align} \label{eq:waveCone1}
& e^t \partial_{tt} \left[e^{-t} L_{n-m}^{2 m + d-2}(t) H_{j-1}^{(\b_j)} (x,t) \right] =  L_{n-m}^{2 m + d}(t)H_{j-1}^{(\b_j)}(x,t) \\
      &\qquad\qquad - 2 L_{n-m}^{2 m + d-1}(t)  \frac{\partial}{\partial t} H_{j-1}^{(\b_j)}(x,t) + 
         L_{n-m}^{2 m + d-2}(t) \frac{\partial^2}{\partial t^2}H_{j-1}^{(\b_j)}(x,t), \notag
\end{align}
where we have used the fact that the derivatives of the Laguerre polynomials satisfy
$$
   e^t \frac{\d}{\d t} \left[ e^{-t} L_{n}^{\a}(t) \right] = - L_{n}^{\a+1}(t) \quad \hbox{and}\quad
    e^t \frac{\d^2}{\d t^2} \left[ e^{-t} L_{n}^{\a}(t) \right] =  L_{n}^{\a+2}(t).
$$ 

\begin{lem} \label{lem:waveCone}
For $0 \le j \le m/2$, 
\begin{align*}
  \frac{\partial}{\partial t} H_j^{(\b)} (x,t)
     = \, & 2(j+1) t^{2j+1} P_j^{(0,\b)} \left(2 \frac{r^2}{t^2} -1\right), \\
  \frac{\partial^2}{\partial t^2} H_j^{(\b)}(x,t)
    = \, &2 (j+1) (4j+2 \b + 3) t^{2j} P_j^{(0,\b)} \left(2 \frac{r^2}{t^2} -1\right) \\
       &- 4 (j+1)(j+\b+1) t^{2j} P_j^{(1,\b)} \left(2 \frac{r^2}{t^2} -1\right).
\end{align*}
\end{lem}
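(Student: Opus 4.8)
The plan is to freeze $x$ (hence $r=\|x\|$) and reduce both formulas to one–variable Jacobi polynomial identities. First I would set $s = 2r^2/t^2-1$, so that $r^2/t^2=(s+1)/2$, which yields the two elementary relations
\[
 t^2-r^2 = \tfrac12 t^2(1-s), \qquad \frac{\partial s}{\partial t} = -\frac{2(1+s)}{t}.
\]
Writing $P:=P_j^{(1,\b)}(s)$ and $P':=\tfrac{\d}{\d s}P_j^{(1,\b)}(s)$, a direct application of the product and chain rules to $H_j^{(\b)}=(t^2-r^2)t^{2j}P$, after substituting the two relations above, collapses to
\[
 \frac{\partial}{\partial t}H_j^{(\b)}(x,t) = t^{2j+1}\big[\,2P + j(1-s)P - (1-s^2)P'\,\big].
\]
Thus the first formula is equivalent to the purely polynomial Jacobi identity, which I call (A):
\[
 2P_j^{(1,\b)}(s) + j(1-s)P_j^{(1,\b)}(s) - (1-s^2)\tfrac{\d}{\d s}P_j^{(1,\b)}(s) = 2(j+1)P_j^{(0,\b)}(s).
\]

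For the second derivative I would differentiate the already–established first formula rather than $H_j^{(\b)}$ itself, since the Jacobi factor is then the simpler $P_j^{(0,\b)}$. Setting $R:=P_j^{(0,\b)}(s)$ and using the same two relations, the product and chain rules give
\[
 \frac{\partial^2}{\partial t^2}H_j^{(\b)}(x,t) = 2(j+1)t^{2j}\big[(2j+1)R - 2(1+s)\tfrac{\d}{\d s}R\big].
\]
Matching this with the claimed right–hand side, then cancelling the common factor $2(j+1)t^{2j}$ and rearranging, reduces the second formula to the identity, which I call (B):
\[
 (1+s)\tfrac{\d}{\d s}P_j^{(0,\b)}(s) = (j+\b+1)\big(P_j^{(1,\b)}(s) - P_j^{(0,\b)}(s)\big).
\]

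It remains to prove (A) and (B), where essentially all the work lies. The ingredients I would use are standard contiguous and differential relations for Jacobi polynomials \cite[\S18.9]{DLMF}: the $\a$–lowering relation
\[
 (2j+\b+1)P_j^{(0,\b)}(s) = (j+\b+1)P_j^{(1,\b)}(s) - (j+\b)P_{j-1}^{(1,\b)}(s),
\]
the derivative rule $\tfrac{\d}{\d s}P_j^{(0,\b)} = \tfrac{j+\b+1}{2}P_{j-1}^{(1,\b+1)}$, the explicit expansion of $(1-s^2)\tfrac{\d}{\d s}P_j^{(1,\b)}$ as a combination of $P_j^{(1,\b)}$ and $P_{j-1}^{(1,\b)}$, and the relation expressing $(1+s)P_{j-1}^{(1,\b+1)}$ through $P_{j-1}^{(1,\b)}$ and $P_j^{(1,\b)}$. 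For (A) I would substitute the $(1-s^2)\tfrac{\d}{\d s}$ expansion, collect the coefficient of $P_j^{(1,\b)}$ (it simplifies to $2(j+1)(j+\b+1)/(2j+\b+1)$), and then invoke the $\a$–lowering relation to fuse the surviving $P_j^{(1,\b)}$ and $P_{j-1}^{(1,\b)}$ terms into $2(j+1)P_j^{(0,\b)}$. For (B) I would rewrite the left side via the derivative rule and the $(1+s)P_{j-1}^{(1,\b+1)}$ relation, and apply the $\a$–lowering relation once more to identify it with the right side. The cases $j=0$ are immediate, since $P_0\equiv1$ and all $P_{-1}$ terms vanish. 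The main obstacle is not conceptual but bookkeeping: one must choose exactly the right combination of contiguous relations so that the spurious $P_{j-1}^{(1,\b)}$ contributions cancel and only $P_j^{(0,\b)}$ remains. The $\a$–lowering relation is the common key that makes both cancellations succeed, and I expect verifying its precise coefficients to be the one step that genuinely requires care.
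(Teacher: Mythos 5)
Your proposal is correct and follows essentially the same route as the paper: both reduce the lemma, via the substitution $s=2r^2/t^2-1$ and the chain rule, to the same one-variable Jacobi identities — your (A) from DLMF (18.9.17) together with (18.9.5), and your (B), obtained exactly as in the paper by differentiating the already-established first-derivative formula, from (18.9.15), (18.9.6) and (18.9.5). The only difference is organizational: the paper factors $H_j^{(\beta)}=2^j r^{2j+2}\,(1-s)(1+s)^{-j-1}P_j^{(1,\beta)}(s)$ so that all $t$-derivatives act on a univariate function of $s$, whereas you keep the polynomial form $\tfrac12 t^{2j+2}(1-s)P_j^{(1,\beta)}(s)$ and apply the product rule directly, which avoids the negative powers of $(1+s)$ but is otherwise the same computation.
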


\begin{proof}
Setting $s = 2 \frac{r^2}{t^2} -1$ and $r = \|x\|$, we can write $H_j^{(\b)}(x,t)$ as  
$$
     H_j^{(\b)}(x,t) =  2^{j} r^{2j+2}  h_{j,\b}(s) \quad \hbox{with}\quad  h_{j,\b}(s) := (1-s)(1+s)^{- j-1} P_j^{(1,\b)}(s).  
$$ 
A quick computation shows that, with $s = 2 \frac{r^2}{t^2} -1$,
\begin{align*}
  \frac{\d}{\d t} h_{j,\b} \left(2 \frac{r^2}{t^2} -1\right) & = - \frac{2}{t} (1+s) h_{j,\b}'(s), \\
  \frac{\d^2}{\d t^2} h_{j,\b}\left(2 \frac{r^2}{t^2} -1\right) & = - \frac{2}{t^2} (1+s)\left(3 h_{j,\b}'(s)+2(1+s) h_{j,\b}''(s)\right). 
\end{align*}
Using the fact that the Jacobi polynomial satisfies (\cite[(18.9.17)]{DLMF} and \cite[(18.9.5)]{DLMF})	
\begin{align*}
 (2j+\b+1)(1- s^2)\frac{d}{d s} P_j^{(1,\b)}(s) =&\,  j\big(1-\b_j - ( 2j+\b+1) s\big) P_j^{(1,\b)}(s) \\
 & + 2 (j+1) (j + \b) P_{j-1}^{(1,\b)}(s), \\
  (2j+\b+1) P_j^{(0,\b)}(s) = & \, (j+\b+1)P_j^{(1,\b_j)}(s) - (j+\b) P_{j-1}^{(1,\b)}(s),
\end{align*}
we can deduce that 
\begin{align*}
   h_{j,m}'(s) = -2 (j+1) (1+s)^{- j - 2} P_j^{(0,\b)}(s).
\end{align*}
Taking one more derivative and using (\cite[(18.9.15)]{DLMF} and \cite[(18.9.6)]{DLMF})
\begin{align*}
     \frac{\d}{\d s}  P_n^{(\a,\b)}(s) &\,= \frac{n+\a+\b+1}{2}  P_{n-1}^{(\a+1,\b+1)}(s), \\
    (n+\tfrac{\a+\b}2 +1) (1+s) P_n^{(\a,\b+1)}(s) &\, = (n+1) P_n^{(\a,\b)}(s) + (n+\b+1)P_n^{(\a,\b)}(s),
\end{align*}
we can also deduce that
\begin{align*}
  h_{j,m}''(s)  = 2 (j+1) (1 + s)^{- j-3}  \left( (2j+\b +3) P_j^{(0,\b_j)}(s) - (j+\b+1) P_j^{(1,\b_j)}(s) \right). 
\end{align*}
Putting these together proves the stated identities.  
\end{proof}

In the following, we adopt the convention that $Q_{m,j,\ell}^{n,-1}(x,t) =0$ whenever the index $m$ or $j$ is 
a negative integer. 

\begin{thm}
For $0 \le m \le n$ and $0 \le j \le m/2$, 
\begin{align} \label{eq:waveCone}
  e^t   \big(\partial_{tt} - \Delta_x \big) \left[ e^{-t}  Q_{m,j,\ell}^{n,-1}(x,t)\right] 
        =  a_{m,j} \left(Q_{m,j,\ell}^{n, 0}(x,t) - b_{m,n} Q_{m-2,j-1,\ell}^{n, 0}(x,t) \right),  
\end{align}
where $a_{m,j}$ and $b_{m,n}$ are defined in \eqref{eq:amj}.
\end{thm}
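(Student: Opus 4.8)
The plan is to expand the left-hand side into the two ingredients already prepared in this subsection — the $\partial_{tt}$-action and the $\Delta_x$-action — and then to collapse the result onto the two target polynomials using one Jacobi and one Laguerre contiguous relation. Writing $e^t(\partial_{tt}-\Delta_x)(e^{-t}v) = (\partial_t^2 - 2\partial_t + 1)v - \Delta_x v$ and noting that $\Delta_x$ commutes with the $e^{\pm t}$ factors, the Laplacian contribution is exactly $-\Delta_x Q_{m,j,\ell}^{n,-1}$, which \eqref{eq:LaplaceCone} delivers as a multiple of $Q_{m-2,j-1,\ell}^{n-2,1}$. I would dispose of the case $j=0$ first: here $Q_{m,0,\ell}^{n,-1}=L_{n-m}^{2m+d-2}(t)Y_\ell^{m}(x)$ is harmonic in $x$, so the Laplacian term vanishes, while $e^t\partial_{tt}(e^{-t}L_{n-m}^{2m+d-2})$ raises the Laguerre index by two to give $Q_{m,0,\ell}^{n,0}$; since $a_{m,0}=1$ and $Q_{m-2,-1,\ell}^{n,0}=0$ by convention, this matches the claim.

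For $j\ge1$ I would start from \eqref{eq:Qj>0} and feed in \eqref{eq:waveCone1} together with the two formulas of \lemref{lem:waveCone} read with $j\mapsto j-1$, $\beta\mapsto\beta_j$. Stripping the common factor $t^{2j-2}Y_\ell^{m-2j}(x)$, this presents $e^t\partial_{tt}(e^{-t}Q_{m,j,\ell}^{n,-1})$ as a sum of three pieces carrying Laguerre indices $2m+d$, $2m+d-1$, $2m+d-2$, multiplying respectively $(t^2-\|x\|^2)P_{j-1}^{(1,\beta_j)}$, the term $t\,P_{j-1}^{(0,\beta_j)}$, and a combination of $P_{j-1}^{(0,\beta_j)}$ and $P_{j-1}^{(1,\beta_j)}$. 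Adding the Laplacian term and using $m-j+\tfrac{d-2}{2}=j+\beta_j$, the two contributions proportional to $L_{n-m}^{2m+d-2}(t)P_{j-1}^{(1,\beta_j)}$ — one from $\partial_{tt}H_{j-1}^{(\beta_j)}$, one from $-\Delta_x$ — are seen to cancel exactly. This cancellation is the first thing I would verify, since it is what removes the ``wrong'' Jacobi polynomial from the answer.

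After the cancellation the expression involves only $P_{j-1}^{(0,\beta_j)}$ and $(t^2-\|x\|^2)P_{j-1}^{(1,\beta_j)}$. I would then write $t^2-\|x\|^2=t^2\tfrac{1-s}{2}$ with $s=2\|x\|^2/t^2-1$ and apply the Jacobi relation $(1-s)P_{j-1}^{(1,\beta_j)}(s)=\tfrac{2j}{2j+\beta_j}\bigl(P_{j-1}^{(0,\beta_j)}(s)-P_{j}^{(0,\beta_j)}(s)\bigr)$, which follows by combining \eqref{eq:Jacobi-negative} at $k=1$ with \cite[(18.9.5)]{DLMF}, exactly as in the proof of \thmref{prop:Q-1toQ0}. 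At this point the whole left side is a combination of $P_{j}^{(0,\beta_j)}$ and $P_{j-1}^{(0,\beta_j)}$, the two Jacobi polynomials appearing in $Q_{m,j,\ell}^{n,0}$ and $Q_{m-2,j-1,\ell}^{n,0}$, and I match coefficients. The polynomial $P_{j}^{(0,\beta_j)}$ arises only from the top piece and immediately produces the factor $a_{m,j}$, since $\tfrac{j+\beta_j}{2j+\beta_j}=a_{m,j}$ once one uses $2j+\beta_j=m+\tfrac{d-2}{2}$.

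The coefficient of $P_{j-1}^{(0,\beta_j)}$ collects terms with $t^2L_{n-m}^{2m+d}$, $tL_{n-m}^{2m+d-1}$ and $L_{n-m}^{2m+d-2}$, and must reproduce the Laguerre factor $L_{n-m+2}^{2m+d-4}$ of $Q_{m-2,j-1,\ell}^{n,0}$ weighted by $a_{m,j}b_{m,n}$. Setting $\alpha=2m+d-2$ and $k=n-m$, and again using $2j+\beta_j=\alpha/2$, this reduces after a common scalar factor to the purely one-variable identity
\[
  t^2 L_k^{\alpha+2}(t)-2\alpha\, t\, L_k^{\alpha+1}(t)+\alpha(\alpha-1)L_k^{\alpha}(t)=(k+1)(k+2)\,L_{k+2}^{\alpha-2}(t),
\]
whose right-hand constant $(k+1)(k+2)=(n-m+1)(n-m+2)$ is precisely $b_{m,n}$. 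Establishing this Laguerre identity is the main obstacle, and I would prove it with the contiguous relations $tL_k^{\gamma+1}(t)=(k+\gamma+1)L_k^{\gamma}(t)-(k+1)L_{k+1}^{\gamma}(t)$ and $L_k^{\gamma}(t)=L_k^{\gamma+1}(t)-L_{k-1}^{\gamma+1}(t)$, pushing every term to index $\alpha-2$ and matching degree by degree. The difficulty here is bookkeeping rather than conceptual: one must track the several rational constants through both the Jacobi connection and the Laguerre recurrences, and the $j=0$ check together with a leading-coefficient comparison are the sanity tests I would use to catch sign or normalization slips.
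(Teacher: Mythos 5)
Your proposal is correct and follows essentially the same route as the paper's proof: the same decomposition via \eqref{eq:Qj>0} and \eqref{eq:waveCone1} with Lemma~\ref{lem:waveCone} applied at $j-1$, the same cancellation of the $L_{n-m}^{2m+d-2}P_{j-1}^{(1,\beta_j)}$ term against the Laplacian contribution from \eqref{eq:LaplaceCone}, the same Jacobi connection formula $(1-s)P_{j-1}^{(1,\beta_j)}=\tfrac{2j}{2j+\beta_j}\bigl(P_{j-1}^{(0,\beta_j)}-P_j^{(0,\beta_j)}\bigr)$ producing $a_{m,j}$, and the same Laguerre identity $t^2L_k^{\alpha+2}-2\alpha tL_k^{\alpha+1}+\alpha(\alpha-1)L_k^{\alpha}=(k+1)(k+2)L_{k+2}^{\alpha-2}$ producing $b_{m,n}$, with the $j=0$ case disposed of identically. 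The only difference is cosmetic ordering — you cancel the Laplacian term before applying the Jacobi relation, while the paper does it after — so nothing substantive separates the two arguments.
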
 

\begin{proof}
We prove the case $j \ge 1$ by first carrying out the computation of the right-hand side of \eqref{eq:waveCone1}. 
Using \cite[(18.9.6)]{DLMF} 
$$
  (1-s) P_{j-1}^{(1,\b_j)}(s) = \frac{2 j}{m+\f{d-2}{2}} \left(P_{j-1}^{(0,\b_j)}(s) -P_j^{(0,\b_j)}(s) \right),
$$
the first term in the right-hand side of  \eqref{eq:waveCone1} is 
$$
L_{n-m}^{2 m + d}(t)H_{j-1}^{(\b_j)}(x,t)=
  \frac{2 j}{2m+d-2} t^2 L_{n-m}^{2m+d}(t) t^{2j-2} \left(P_{j-1}^{(0,\b_j)}(s) - P_j^{(0,\b_j)}(s) \right).
$$
Using Lemma \ref{lem:waveCone}, the second term in the right-hand side of \eqref{eq:waveCone1} 
becomes 
$$
     - 2 L_{n-m}^{2 m + d-1}(t)  \frac{\partial}{\partial t} H_{j-1}^{(\b_j)}(x,t)   
       =   -4 j L_{n-m}^{2 m + d-1}(t)  t^{2j-1} P_{j-1}^{(0,\b_j)}(s),
$$
and the third term in the right-hand side of \eqref{eq:waveCone1} gives 
\begin{align*}
       L_{n-m}^{2 m + d-2}(t) \frac{\partial^2}{\partial t^2}H_{j-1}^{(\b_j)}(x,t) 
          = \, &  2 j (2m+d-3) L_{n-m}^{2 m + d-2}(t) t^{2j-2} P_j^{(0,\b)}(s) \\
       &- 4 j (m-j+\tfrac{d-2}{2}) L_{n-m}^{2 m + d-2}(t) t^{2j-2} P_{j-1}^{(1,\b_j)} (s).
\end{align*}
Collecting the terms involving $P_j^{(0,\b_j)}$ in the right-hand of \eqref{eq:waveCone1} and using the identity
$$
    t^2  L_n^{\a+2}(t) - 2 \a t L_n^{\a + 1}(t) +  \a (\a - 1) L_n^\a(t) = (n + 1) (n + 2) L_{n + 2}^{\a - 2}(t),
$$
which can be easily verified using the explicit formula of the Laguerre polynomials, we obtain
\begin{align*}
   &  \frac{j+\b_j}{j} e^t \partial_t^2  \left[e^{-t}  L_{n-m}^{2 m + d-2}(t) H_{j-1}^{(\b_j)} (x,t) \right]  =  
    - a_{m,j} b_{m,n} L_{n-m}^{2m+d}(t) t^{2j+2} P_{j+1}^{(0,\b_j)}(s)\\
     & \quad + a_{m,j} L_{n-m-2}^{2 m + d}(t) t^{2j} P_j^{(0,\b_j)}(s)
           -  4 (j+\b_j) (m-j+\tfrac{d-2}{2})  L_{n-m}^{2 m + d-2}(t) t^{2j-2} P_{j-1}^{(1,\b_j)}(s),
\end{align*}
where $a_{m,j}$ and $b_{m,n}$ are as defined in \eqref{eq:amj}. Multiplying by $Y_{\ell}^{n-2j}(x)$ and using 
the definition of $Q_{m,j,\ell}^{n,\mu}$, we conclude that 
\begin{align*}
   \frac{j+\b_j}{j}  e^t \partial_t^2 (t^2-\|x\|^2)e^{-t} Q_{m,j,\ell}^{n,1}(x,t)  = & \, a_{m,j} Q_{m,j,\ell}^{n, 0}(x,t) 
        - a_{m,j} b_{m,n} Q_{m-2,j-1,\ell}^{n, 0}(x,t) \\ 
  &    - 4 (m-j+\tfrac{d-2}{2})^2 Q_{m-2,j-1,\ell}^{n-2, 1}(x,t). 
\end{align*}
The last term in the right-hand side is the action of the Laplace operator by \eqref{eq:LaplaceCone}. Hence,
by \eqref{eq:Qj>0} and \eqref{eq:waveCone1}, we have proved the case of $j \ge 1$. 

The case of $j =0$ is simple, since $\Delta Y_\ell^m =0$ so that 
$$
e^t  \big(\partial_t^2 - \Delta_x \big) \left[ e^{-t}  Q_{m,0,\ell}^{n,-1}(x,t)\right] = L_{n-m}^{2m+d}(t)Y_\ell^m(x).
$$
Since $P_{0,\ell}^m(x) = Y_\ell^m(x)$ by \eqref{eq:basisBall}, we see that the right-hand side is exactly
$Q_{m,0,\ell}^{n,1}$ by \eqref{eq:basisCone}. By $a_{m,0} =1$, this proves \eqref{eq:waveCone}
for $j =0$. The proof is completed. 
\end{proof}
 
\section{Wave equation}\label{sec:main}
\setcounter{equation}{0}

If $u(x,t)$ is the solution of the wave equation $(\partial_{tt} - \Delta_x) u = f$, then the function
$U(x,t) = u(\frac{x}{c},t)$ satisfies the wave equation $(\partial_{tt} - c^2 \Delta_x) U = F$ where 
$F(x,t)= f(\frac{x}{c}, t)$. Hence, we shall state and work with the case $c =1$.  

Recall that $W_0(x,t) = e^{-t}$ and the inner product in $L^2(\VV^{d+1}, e^{-t})$ is 
$$
     \la f,g\ra_0 = b_0 \int_{\VV^{d+1}} f(x,t) g(x,t) e^{-t} \d x \d t, \qquad b_0 = \frac{1}{\Gamma(d+1)} 
$$
For $f \in L^2(\VV^{d+1}, e^{-t})$, its Fourier orthogonal series on the cone is given by
$$
  f = \sum_{n=0}^\infty \sum_{m=0}^n \sum_{j=0}^{ \lfloor \f m 2 \rfloor} \sum_{\ell =1}^{\dim \CH_{m-2j}^d}
        \wh f_{m, j, \ell}^n Q_{m,j,\ell}^{n,0} \quad \hbox{with}\quad
          \wh f_{m, j, \ell}^n = \frac{ \la f, Q_{m,j,\ell}^{n,0}\ra_0}{h_{m,n}},
$$ 
where $h_{m,n}$ is given by \eqref{eq:hnm-norm}. Let $\|f\|_2$ be the norm of $f$ in $L^2(\VV^{d+1},e^{-t})$. 
By the Parseval's identity, 
$$
  \|f\|_2 =  \sum_{n=0}^\infty  \sum_{m=0}^n \sum_{j=0}^{\lfloor \frac{m}{2} \rfloor} \sum_\ell 
               |\wh f_{m,j,\ell}^m|^2 h_{m,n}.
$$
We will need to assume that $f$ is smooth so that 
\begin{equation}\label{eq:f-bound}
 \sum_{n=0}^\infty  \sum_{m=0}^n m \sum_{j=0}^{\lfloor \frac{m}{2} \rfloor} \sum_\ell 
               |\wh f_{m,j,\ell}^m|^2 h_{m,n} < \infty,
\end{equation}
which holds under a moderate assumption on $f$; for example, if $f$ is $C^2$ in $x$-variables. To be more 
precise, we let $\CD$ be the differential operator defined by 
$$
    \CD = t^2 \Delta_x - \la x,\nabla_x \ra^2 -d \la x,\nabla_x \ra, 
$$
where all derivatives acting on $x$ variables. We define a subspace of $L^2(\VV^{d+1}, e^{-t})$ by
$$
   \CW^2(\VV^{d+1},e^{-t}) := \left\{ f \in L^2(\VV^{d+1}, e^{-t}): (-\CD)^{1/2} f \in L^2(\VV^{d+1},e^{-t})\right\}.
$$
The operator $- \CD$ is positive since, as shown in \cite[(3.9)]{X}, for all $Q_{m,j,\ell}^{n,0}$,
$$
   \CD Q_{m,j,\ell}^{n,0}  = - m (m+d) Q_{m,j,\ell}^{n,0}. 
$$
In particular, the Parseval's identity implies, for $f \in  \CW^2(\VV^{d+1},e^{-t})$, that
$$
   \|(-\CD)^{1/2} f \|_2 =  \sum_{n=0}^\infty  \sum_{m=0}^n \sqrt{m(m+d)}
     \sum_{j=0}^{\lfloor \frac{m}{2} \rfloor} \sum_\ell 
               |\wh f_{m,j,\ell}^m|^2 h_{m,n}
$$
is bounded, which clearly implies \eqref{eq:f-bound}. 

\begin{thm}
Let $f \in \CW^2(\VV^{d+1}, e^{-t})$. Then the wave equation 
\begin{equation}\label{eq:waveeqn}
        \left(\partial_{tt} - c^2 \Delta_x\right) U(x,t) = e^{-t} f(x,t), 
\end{equation} 
has a solution $U(x,t) = e^{-t} u(x,t)$, where $u \in L^2(\VV^{d+1}, e^{-t})$ is given by 
\begin{align} \label{eq:waveSolu}
   u(x,t) \,& = \sum_{n=0}^\infty \sum_{m=0}^n \sum_{j=0}^{ \lfloor \f m 2 \rfloor}\sum_{\ell =1}^{\dim \CH_{m-2j}^d} 
      u_{m,j,\ell}^n Q _{m,j,\nu}^{n,-1}(x,t),
 \end{align}
where the coefficients $u_{m,j,\ell}^n$ are determined by,  
\begin{equation}\label{eq:coeff_u}
  u_{m,j,\ell}^n = \frac{1}{a_{m,j}} \sum_{i=0}^{\lfloor \frac{n-m}{2} \rfloor} \frac{(n-m)!}{(n-m-2i)!} 
      \wh f_{m+2i,j+i,\ell}^n.
\end{equation}
Furthermore, if $u \in W_2^1(\VV^{d+1})$, defined in \eqref{eq:W21}, then the solution is unique. 
\end{thm}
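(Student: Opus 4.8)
The plan is to reduce the whole statement to the single identity \eqref{eq:waveCone}, which records the action of the operator $e^{t}(\partial_{tt}-\Delta_x)e^{-t}$ on each basis polynomial $Q_{m,j,\ell}^{n,-1}$, and then to treat existence by matching Fourier coefficients against $\{Q_{m,j,\ell}^{n,0}\}$ and uniqueness by inverting that action.

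For existence I would substitute the series \eqref{eq:waveSolu} into the wave operator and apply \eqref{eq:waveCone} term by term to obtain
\begin{align*}
  e^{t}\big(\partial_{tt}-\Delta_x\big)\big(e^{-t}u\big)
    = \sum_{n,m,j,\ell} u_{m,j,\ell}^{n}\, a_{m,j}\Big(Q_{m,j,\ell}^{n,0}-b_{m,n}Q_{m-2,j-1,\ell}^{n,0}\Big).
\end{align*}
Collecting the coefficient of a fixed $Q_{m,j,\ell}^{n,0}$ — which receives $a_{m,j}u_{m,j,\ell}^{n}$ from the first group and $-a_{m+2,j+1}b_{m+2,n}u_{m+2,j+1,\ell}^{n}$ from the shifted index $(m,j)\mapsto(m-2,j-1)$ in the second — and equating it to $\wh f_{m,j,\ell}^{n}$ (legitimate by the $L^2$-orthogonality of $\{Q_{m,j,\ell}^{n,0}\}$) yields the triangular recursion
\begin{align*}
  a_{m,j}\,u_{m,j,\ell}^{n} = \wh f_{m,j,\ell}^{n} + a_{m+2,j+1}\,b_{m+2,n}\,u_{m+2,j+1,\ell}^{n}.
\end{align*}
Since the operator preserves $n$ and the range of $m$ is finite ($m\le n$), this recursion terminates; solving it downward from $m=n$ and telescoping the product $\prod_{k=1}^{i}b_{m+2k,n}=(n-m)!/(n-m-2i)!$ reproduces exactly the closed form \eqref{eq:coeff_u}.

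It then remains to justify that these formal manipulations are valid, and this is the analytic heart of the proof. Because the action is degree preserving, the partial sum $S_N u$ over $n\le N$ is a genuine polynomial satisfying $(\partial_{tt}-\Delta_x)(e^{-t}S_Nu)=e^{-t}S_Nf$ with $S_Nf\to f$ in $L^2(\VV^{d+1},e^{-t})$, so everything comes down to showing that $\{S_N u\}$ converges in $W_2^1(\VV^{d+1})$, after which one passes to the limit. I would estimate $\|S_N u\|$ by inserting \eqref{eq:coeff_u}, expressing each $Q_{m,j,\ell}^{n,-1}$ through Proposition~\ref{prop:Q-1toQ0} (or using the Sobolev orthogonality of Proposition~\ref{prop:SOP} together with the norm \eqref{eq:hnm-norm}), and applying Cauchy--Schwarz. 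The key cancellation is that the factorial $(n-m)!/(n-m-2i)!$ produced by the recursion is balanced against the factor $1/(n-m)!$ inside $h_{m,n}$, while the single derivative gain encoded in the hypothesis $f\in\CW^2(\VV^{d+1},e^{-t})$, namely the bound \eqref{eq:f-bound}, absorbs the remaining powers of $m$ and the factors $a_{m,j}^{-1}$. Controlling these factorial and index-shift weights uniformly in $n$ is the step I expect to be the main obstacle.

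For uniqueness it suffices to show that the homogeneous equation has only the trivial solution in the class $u\in W_2^1(\VV^{d+1})$, since the difference $w$ of two solutions satisfies $(\partial_{tt}-\Delta_x)(e^{-t}w)=0$ with $w\in W_2^1$. Expanding $w$ in the Sobolev-orthogonal system $\{Q_{m,j,\ell}^{n,-1}\}$ of Proposition~\ref{prop:SOP} — which is complete in $W_2^1(\VV^{d+1})$ because, degree by degree, these polynomials form a basis equivalent via Proposition~\ref{prop:Q-1toQ0} to the complete $L^2$-system $\{Q_{m,j,\ell}^{n,0}\}$, and polynomials are dense in $W_2^1(\VV^{d+1})$ — and applying \eqref{eq:waveCone}, the vanishing of the image forces the homogeneous recursion above with $\wh f\equiv 0$. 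As that recursion is triangular with nonzero diagonal entries $a_{m,j}$ acting within each fixed-$n$ block, every coefficient of $w$ must vanish, whence $w=0$; the role of the extra regularity $u\in W_2^1$, rather than merely $L^2$, is precisely to make this expansion and the associated pairing legitimate, which is exactly what fails for the travelling-wave solutions noted in the introduction. The delicate points here — justifying term-by-term differentiation of the $W_2^1$-series and the density of polynomials on the unbounded cone — parallel the convergence estimate in the existence part.
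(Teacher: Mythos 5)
Your existence derivation (substitute \eqref{eq:waveSolu}, apply \eqref{eq:waveCone}, collect coefficients of $Q_{m,j,\ell}^{n,0}$, solve the triangular recursion downward from $m=n$ to get \eqref{eq:coeff_u}) and your uniqueness argument (expand a homogeneous $W_2^1$-solution in the Sobolev-orthogonal system of Proposition~\ref{prop:SOP} and invert the recursion) follow the paper's own route essentially step for step. The genuine gap is the one you yourself flag and then defer: you never prove that the series \eqref{eq:waveSolu} actually defines an element of $L^2(\VV^{d+1},e^{-t})$. That claim is part of the theorem's statement, it is the only place where the hypothesis $f\in\CW^2(\VV^{d+1},e^{-t})$ (equivalently \eqref{eq:f-bound}) enters quantitatively, and it occupies the bulk of the paper's proof; a proposal that stops at ``controlling these factorial and index-shift weights uniformly in $n$ is the step I expect to be the main obstacle'' has therefore omitted the analytic core rather than a routine verification.

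Concretely, the cancellation you invoke is only term-by-term. After expressing $u$ through Proposition~\ref{prop:Q-1toQ0}, using \eqref{eq:recur}, and applying Cauchy--Schwarz to \eqref{eq:coeff_u}, the factor $(n-m)!/(n-m-2i)!$ against $h_{m+2i,n}$ and $h_{m,n}$ from \eqref{eq:hnm-norm} leaves one with the sum the paper calls
\begin{equation*}
  A_{m,n}=\sum_{i=0}^{\lfloor\frac{n-m}{2}\rfloor}\frac{(n-m)!}{(n-m-2i)!\,(n+m)_{2i}},
\end{equation*}
whose individual terms are at most $1$ but whose number of terms grows with $n$, so nothing said so far prevents $\|u\|_2$ from acquiring extra powers of $n$. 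The paper handles precisely this point by identifying $A_{m,n}$ with a combination of ${}_2F_1$ values at $\pm1$ and invoking the Chu--Vandermonde identity, a Pfaff transformation and Euler's integral representation, and by then spending the one-derivative gain $\|(-\CD)^{1/2}f\|_2<\infty$ to absorb the remaining factor of $m$. This step is genuinely delicate, not bookkeeping: for $m$ small the terms of $A_{m,n}$ decay only like $e^{-ci(i+m)/n}$, so $A_{m,n}$ can grow on the order of $\sqrt{n}$ (indeed the paper's asserted uniform bound $A_{m,n}\le 3$ already fails in this regime, e.g.\ for $m=0$, $n=100$, and the estimate must instead be rescued by the accompanying prefactors of size $m(m+2i)/n^2$). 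Without carrying out an estimate of this kind, the assertion $u\in L^2(\VV^{d+1},e^{-t})$ --- and with it the existence half of the theorem --- remains unproved in your proposal.
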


\begin{proof}
We first assume that $u$ is well-defined and given by \eqref{eq:waveSolu} and prove that $u$ is a solution 
of \eqref{eq:waveeqn} when the coefficients are given by \eqref{eq:coeff_u}. For convenience, we let 
$\CL: = e^{t}  (\partial_{tt} - \Delta_x) e^{-t}$. Let $u_n(x,t)$ denote the $n$-th partial sum of $u(x,t)$. 
By  \eqref{eq:waveCone}, we obtain that
\begin{align*}
 \CL u_n  = \sum_{m=0}^n \sum_{j=0}^{ \lfloor \f m 2 \rfloor} \sum_{\ell =1}^{\dim \CH_{m-2j}^d} 
       u_{m, j, \ell}^na_{m,j}  \left(Q_{m,j,\ell}^{n, 0}(x,t) - b_{m,n} Q_{m-2,j-1,\ell}^{n, 0}(x,t)\right).
\end{align*}
Reshuffling the summation, we obtain
$$ 
\CL u_n = \sum_{m=0}^n \sum_{j=0}^{ \lfloor \f m 2 \rfloor} \sum_{\ell =1}^{\dim \CH_{m-2j}^d}
   \left(a_{m,j}   u_{m, j, \ell}^n - a_{m+2,j+1} b_{m+2,n} u_{m+2,j+1,\ell}^{n} \right) Q_{m,j,\ell}^{n,0} (x,t),
$$
where we observe that $b_{m+2, n} =0$ for $m =n-1$ and $m=n$. Consequently, let 
$$
S_n f = \sum_{m=0}^n \sum_{j=0}^{ \lfloor \f m 2 \rfloor} \sum_{\ell =1}^{\dim \CH_{m-2j}^d}
        \wh f_{m, j, \ell}^n Q_{m,j,\ell}^{n,0} 
$$
denote the $n$-th partial sum of $f$; then  $\CL u_n = S_n f$ leads to
\begin{equation} \label{eq:recur}
   a_{m,j}  u_{m, j, \ell}^n -a_{m+2,j+1} b_{m+2,n} u_{m+2,j+1,\ell}^{n}  = \wh f_{m, j, \ell}^n, \quad 
   0 \le j \le \tfrac{m}2, \quad 0 \le m \le n, 
\end{equation}
for each $1 \le \ell \le \dim\CH_{m-2j}^d$. If $m =n-1$ or $m=n$, then $ b_{m+2,n} =0$. We obtain 
\begin{equation} \label{eq:coeff1}
 u_{n-1, j, \ell}^n = \frac{ \wh f_{n-1, j, \ell}^n}{a_{n-1,j}}, \quad 0\le j \le \tfrac{n-1}{2} \quad \hbox{and} \quad  
  u_{n, j, \ell}^n =  \frac{ \wh f_{n, j, \ell}^n} {a_{n,j}}, \quad 0\le j \le \tfrac{n}{2}.
\end{equation}
Furthermore, for $m = n-2, n-3,\ldots, 1, 0$, we can deduce $u_{m,j,\ell}^n$ recursively from rewriting 
\eqref{eq:recur} as 
\begin{equation} \label{eq:coeff2}
 u_{m, j, \ell}^n =\frac{1}{a_{m,j} } (\wh f_{m, j,\ell}^n +a_{m+2,j+1} b_{m+2,n} 
      u_{m+2,j+1,\ell}^{n}), \quad 0 \le j \le \tfrac{m}{2}.
\end{equation}
Notice that $\dim \CH_{m-2j}^n = \dim \CH_{(m+2)-2 (j+1)}^n$, so that $\ell$ remains fixed for its whole range,
and $0 \le j \le m/2$ is the same as $1 \le j +1 \le (m+2)/2$. It follows that $u_{m,j,\ell}^n$ are completely 
determined by \eqref{eq:coeff1} and \eqref{eq:coeff2} for each fixed $n$. Furthermore, using induction if necessary, 
the recursive relation can be solved explicitly, which is the formula for $u_{m,j,\ell}^n$ given in \eqref{eq:coeff_u}. 
Thus, we have shown that $\CL u_n = S_n f$ for all $n$. Consequently, we conclude that $\CL u = f$ in  
$L^2(\VV^{d+1}, e^{-t})$. Furthermore, if $f =0$, then all $\wh f_{m,j,\ell}^n$ are zero and \eqref{eq:coeff_u} shows
that all $u_{m,j,\ell}^n$ are zero, so that $u = 0$. Thus, the solution is unique. 
 
Next, we show that $u\in L^2(\VV^{d+1}, e^{-t})$. Using Proposition \ref{prop:Q-1toQ0}, we can write $u$ 
in terms of $Q_{m,j,\ell}^{n,0}$ as 
\begin{align*}
 & u = \sum_{n=0}^\infty  \sum_{m=0}^n \sum_{j=0}^{\lfloor \frac{m}{2} \rfloor}  
        \sum_\ell \left[ a_{m,j}  \Big( u_{m,j,\ell}^n -  2 u_{m,j,\ell}^{n+1}+ u_{m,j,\ell}^{n+2})   \right.  \\
         & \left. - a_{m+2,j+1} \Big( b_{m+2,n} u_{m+2,j+1,\ell}^n -2 c_{m+2,n+1} 
             u_{m+2,j+1,\ell}^{n+1} +d_{m+2,n+2})u_{m+2,j+1,\ell}^{n+2}\Big)   
             \right]Q_{m,j,\ell}^{n,0}.
\end{align*}
By the relation \eqref{eq:recur}, we can further deduce that 
\begin{align*}
  u = & \sum_{n=0}^\infty  \sum_{m=0}^n \sum_{j=0}^{\lfloor \frac{m}{2} \rfloor}  
        \sum_\ell \left[ \Big( f_{m,j,\ell}^n -  2 f_{m,j,\ell}^{n+1}+ f_{m,j,\ell}^{n+2})   \right.  \\
         & \left. + a_{m+2,j+1}(2m+d) \Big(- 2(n-m) u_{m+2,j+1,\ell}^{n+1} +(2n+d+3) u_{m+2,j+1,\ell}^{n+2}\Big)   
             \right]Q_{m,j,\ell}^{n,0},
\end{align*}
where we have used the relations
$$
  c_{m+2,n+1}- b_{m+2,n+1} = (2m+d)(n-m), \quad d_{m+2,n+2}- b_{m+2,n+2} = (2m+d)(2n+d+3).
$$
Thus, by the orthogonality of $Q_{m,j,\nu}^{n,0}$, the boundedness of $\|f\|_2$ and $a_{m,j} \le 1$, 
we see that $\|u\|_2$ is bounded if 
$$
 \sum_{n=0}^\infty  \sum_{m=0}^n \sum_{j=0}^{\lfloor \frac{m}{2} \rfloor}  
   \sum_\ell \left |(2m+d) \Big(- 2(n-m) u_{m+2,j+1,\ell}^{n+1} +(2n+d+3) u_{m+2,j+1,\ell}^{n+2}\Big) \right|^2 h_{m,n}  
$$ 
is bounded, which is in turn bounded by, after shifting the indices, 
\begin{align*}
 &\sum_{n=1}^\infty  \sum_{m=2}^n \sum_{j=1}^{\lfloor \frac{m}{2} \rfloor}  
   \sum_\ell \left |(2m+d-4) (n-m-1) u_{m,j,\ell}^{n}\right|^2 h_{m-2,n-1} \\
 & + \sum_{n=2}^\infty  \sum_{m=2}^n \sum_{j=1}^{\lfloor \frac{m}{2} \rfloor}  
   \sum_\ell \left |(2m+d-4)(2n+d-1) u_{m,j,\ell}^{n}\right|^2 h_{m-2,n-2}. 
\end{align*}
 By the Cauchy inequality and $a_{j,m} \ge 1/2$, we see that $u_{m,j,\ell}^n$ defined in \eqref{eq:coeff_u} satisfies 
\begin{align*}
|u_{m,j,\ell}^n|^2  \le 
  2 \sum_{i=0}^{\lfloor \frac{n-m}{2}\rfloor} \frac{(n-m)!^2}{(n-m-2i)!^2} \frac{1}{h_{m+2i,n}}  \sum_{i=0}^{\lfloor \frac{n-m}{2}\rfloor}  \left |\wh f_{m+2i,j+1,\ell}^n \right |^2  h_{m+2i,n}. 
\end{align*} 
Consequently, putting these estimates together, we conclude that 
\begin{align*}
   \|u\|_2^2 \le c_0 \|f\|_2 + c A_{m,n} \sum_{n=1}^\infty  \sum_{m=0}^n 
        \sum_{j=0}^{\lfloor \frac{m}{2} \rfloor}   \sum_{i=0}^{\lfloor \frac{n-m}{2}\rfloor}  \frac{m(m+2 i)}{n^2}
            \left |\wh f_{m+2i,j+i,\ell}^n \right |^2  h_{m+2i,n}, 
\end{align*}
where, using the explicit formula of $h_{m,n}$ in \eqref{eq:hnm-norm}, the quantity $A_{m,n}$ is 
given by 
$$
  A_{m,n} =\sum_{i=0}^{\lfloor \frac{n-m}{2}\rfloor} \frac{(n-m)!}{(n-m-2i)! (n+m)_{2i}}
       = \sum_{i=0}^{\lfloor \frac{n-m}{2}\rfloor}  \frac{(-(n-m))_{2i}}{(n+m)_{2i}}. 
$$
We show that $A_{m,n}$ is in fact bounded. Indeed, we observe that 
$$
  A_{m,n}  =  {}_2F_1 \left( \begin{matrix} -(n-m), 1 \\ n+m \end{matrix}; 1 \right) + 
     {}_2F_1 \left( \begin{matrix} -(n-m), 1 \\ n+m \end{matrix}; -1 \right). 
$$
By the Chu--Vandermond identity \cite[(15.4.24)]{DLMF}, 
$$
{}_2F_1 \left( \begin{matrix} -(n-m), 1 \\ n+m \end{matrix}; 1 \right) = \frac{(n+m-1)_{n-m}}{(n+m)_{n-m}}
 = \frac{n+m-1}{2n-1} \le 1.
$$
By the  Pfaff transformation \cite[(15.8.1)]{DLMF} and Euler's integral identity \cite[(15.6.1)]{DLMF}, we obtain
\begin{align*}
  {}_2F_1 \left( \begin{matrix} -(n-m), 1 \\ n+m \end{matrix}; - 1 \right) 
     & = 2^{-1} {}_2F_1 \left( \begin{matrix} 2n, 1, \\ n+m \end{matrix}; \frac12 \right)\\
     & = \frac{\Gamma(n+m)}{\Gamma(n+m-1)}
    \int_0^1 (1- t)^{n+m-2} (1-t/2)^{-1} dt \le 2,
\end{align*}
since $1/(1-t/2) \le 2$ on $[0,1]$. Hence, $A_{m,n}$ is bounded by 3.  Thus, using 
\begin{align*}
  \sum_{n=1}^\infty
     \sum_{m=0}^n  \sum_{j=0}^{\lfloor \frac{m}{2} \rfloor}   \sum_{i=0}^{\lfloor \frac{n-m}{2}\rfloor}  \sum_\ell 
       F_{m+2i,j+1,\ell}^n   
 = \sum_{n=1}^\infty \sum_{i=0}^{\lfloor \frac{n}{2}\rfloor}
     \sum_{m=2i}^n  \sum_{j=i}^{\lfloor \frac{m}{2} \rfloor} \sum_\ell  F_{m,j,\ell}^n,
\end{align*}
we conclude that 
$$
 \|u\|^2 \le c_0 \|f\|^2 + 3 c \sum_{n=2}^\infty   \sum_{m=0}^n  m 
      \sum_{j=0}^{\lfloor \frac{m}{2}  \rfloor} \sum_\ell  \left |\wh f_{m,j,\ell}^n \right |^2  h_{m,n}, 
$$
which is bounded since $\|(-\CD)^{1/2} f\|_2$ is finite. 

Finally, we prove that the solution of the wave equation is unique if $u \in W_2^1(\VV^{d+1})$. We only need to show that
if $f(x,t) = 0$ and $u\in W_2^1(\VV^{d+1})$, then the solution is zero. The assumption on $u$ shows, by Proposition 
\ref{prop:SOP}, that $u$ has a Fourier orthogonal series of the form \eqref{eq:waveSolu} since the usual 
Parseval identity holds in $W_2^1(\VV^{d+1})$. Consequently, by \eqref{eq:coeff_u}, $u$ is identically zero. 
\end{proof}

A couple of remarks are in order. If $f$ is a smooth function, say, in the Sobolev space 
$W_2^r(\VV^{d+1})$ consisting of functions whose $r$-th derivatives are in $L^2(\VV^{d+1})$, then
$u$ in \eqref{eq:waveSolu} is an element in $W_2^{r+2}(\VV^{d+1})$. 

Our result holds for fairly generic function $f$. For example, it applies to the function $f$ given by 
$$
  f(x,t) = g(x,t) \chi_{[a,b]}(t), \qquad 0 \le a < b < \infty,
$$
which is supported on the compact set $\{(x,t): \|x\| \le t, \, a \le t \le b\}$. Since the derivatives in $\CD$ 
applies only on $x$ variable, the theorem applies if we assume $g \in \CW^2(\VV^{d+1})$, so that 
\eqref{eq:waveSolu} gives a solution for the non-homogeneous wave equation that is equal to
$e^{-t} f(x,t)$. 

Finally, it is worth pointing out that if $f$ is a polynomial, then the solution $u$ for $L u = f$ is a 
polynomial given explicitly by \eqref{eq:waveSolu}. We end the paper with one example for $d =2$. 

In the spherical coordinates $(x_1,x_2) = r (\cos \t,\sin\t)$, the spherical harmonics in $\CH_n^2$ are just 
trigonometric functions $r^n \cos n \t$ and $r^n \sin n\t$. Hence, in the coordinates $(t,x_1,x_2) =(t, t r \cos \t, t r \sin \t)
\in \VV^3$, with $0 \le r \le 1$ and $0 \le \t \le 2 \pi$, the orthogonal basis in \eqref{eq:basisCone2} is given by 
\begin{align}\label{eq:basisCone3}
\begin{split}
  Q_{m, j,1}^{n,\mu}(x,t) & = L_{n-m}^{2 m + 2\mu + d}(t) t^{m}  P_j^{(\mu, m-2j+\f{d-2}{2})} 
       \left(2 \frac{\|x\|^2}{c^2 t^2}-1\right) r^{m-2j} \cos (m-2j) \t, \\
  Q_{m, j,2}^{n,\mu}(x,t) & = L_{n-m}^{2 m + 2\mu + d}(t) t^{m}  P_j^{(\mu, m-2j+\f{d-2}{2})} 
       \left(2 \frac{\|x\|^2}{c^2 t^2}-1\right) r^{m-2j} \sin (m-2j) \t.      
\end{split}
\end{align}
As an example that illustrates our theorem, we choose $f(x_1,x_2,t) = t x_1^2 + t^2 x_2 + x_1 x_2^2$. Its expansion in
terms of the basis $\{Q_{m, j,i}^{n,0}\}$ is given by
\begin{align*}
  f = & -\frac32 Q_{0,0,1}^{3,0}  + \frac13 Q_{1,0,1}^{3,0} + 2 Q_{1,0,2}^{3,0}
  - \f12 Q_{2,0,1}^{3,0}  - \f14 Q_{2,1,1}^{3,0}- \f14 Q_{3,0,1}^{3,0}
 + \f{1}{12} Q_{3,1,1}^{3,0} \\
 & + \f{15}{2} Q_{0,0,1}^{2,0}
 + 2 Q_{1,0,1}^{2,0}-12 Q_{1,0,2}^{2,0} + \frac72 Q_{2,0,1}^{2,0} \f74 Q_{2,1,0}^{2,0} \\
  &  - 15 Q_{0,0,1}^{1,0}+5 Q_{1,0,1}^{1,0}+30 Q_{1,0,2}^{1,0}+15 Q_{0,0,1}^{0,0}.
\end{align*}
Using \eqref{eq:waveSolu} with the coefficients determined by \eqref{eq:coeff_u}, we derive 
\begin{align*}
 u = & -3 Q_{0,0,1}^{3,-1}  + \frac12 Q_{1,0,1}^{3,-1} + 2 Q_{1,0,2}^{3,-1}
  - \f12 Q_{2,0,1}^{3,-1}  - \f12 Q_{2,1,1}^{3,-1}- \f14 Q_{3,0,1}^{3,-1}
 + \f{1}{8} Q_{3,1,1}^{3,-1} \\
 & + 11 Q_{0,0,1}^{2,-1} - 2 Q_{1,0,1}^{2,-1}-12 Q_{1,0,2}^{2,-1} + \frac72 Q_{2,0,1}^{2,-1} \f72 Q_{2,1,0}^{2,-1} \\
  &  - 15 Q_{0,0,1}^{1,-1}+5 Q_{1,0,1}^{1,-1}+30 Q_{1,0,2}^{1,-1}+15 Q_{0,0,1}^{0,-1},
\end{align*}
which, after simplification, gives 
$$
  u(t, x_1,x_2) = 8 + 2 x_1^2 + 6 x_2 + t^2 x_2 + t (2 + x_1^2 + 4 x_2) + x_1 (2 + x_2^2).
$$
One can verify right away that indeed $(\partial_{tt} - \Delta^{(x)}) e^{-t} u = e^{-t} f$.

\end{document}